\documentclass[10pt,a4paper]{article}

\usepackage[english]{babel}
\usepackage[T1]{fontenc}

\usepackage{xcolor}
\usepackage[font=small,labelfont=sc]{subcaption}
\usepackage{amsmath}
\usepackage{amssymb}
\usepackage{amsthm}
  \theoremstyle{plain}
  \newtheorem{theorem}{Theorem}
  \newtheorem{lemma}{Lemma}

  \newtheorem{definition}{Definition}[section]
  
  \theoremstyle{remark}
  \newtheorem*{remark}{Remark}
\usepackage{mathrsfs}
\usepackage{mathtools}
\usepackage[left=2.7cm,right=2.7cm,top=3.0cm,bottom=3.0cm]{geometry}
\usepackage[round]{natbib}
\usepackage[colorlinks=true, allcolors=blue, hypertexnames=false]{hyperref}
\usepackage{booktabs}
\usepackage{algorithmic}
\usepackage{algorithm}
\usepackage{placeins}

\allowdisplaybreaks[4]

\begin{document}

\title{\bf Dynamic mean--variance portfolio selection with no-shorting constraints and unknown investment opportunity sets}

\author{Xun Li\thanks{%
Department of Applied Mathematics,
The Hong Kong Polytechnic University,
Kowloon, Hong Kong, China.
E-mail: li.xun@polyu.edu.hk}
\and
Yutian Wang\thanks{%
Department of Applied Mathematics,
The Hong Kong Polytechnic University,
Kowloon, Hong Kong, China.
E-mail: yutian.wang@connect.polyu.hk}
\and
Xun Yu Zhou\thanks{%
Department of Industrial Engineering and Operations Research \& Data Science Institute,
Columbia University, New York, USA, NY 10027.
E-mail: xz2574@columbia.edu}}

\date{\today}

\maketitle

\begin{abstract}
We study continuous-time mean--variance portfolio selection with no-shorting constraints and unknown investment opportunity sets from a reinforcement learning (RL) perspective.
The problem is a {\it constrained} stochastic linear--quadratic control problem for which the entropy-regularized exploratory  formulation of \citet{wangReinforcementLearningContinuous2020} leads to difficulty in theoretical analysis, because enforcing the constraint on the support of randomized policies nullifies the tractable Gaussian exploration.
To tackle this challenge,
we introduce an auxiliary exploratory problem {\it without} entropy in which exploratory policies are still Gaussian whose samples may violate the no-shorting requirement
but their means satisfy it.
We then prove that,
for a suitable choice of exploration variance,
the mean of the optimal Gaussian policy of the auxiliary problem coincides with the optimal policy of the original problem.
Motivated by this theoretical result,
we develop a model-free RL algorithm that learns the optimal policy of the auxiliary (and hence the original) problem directly from trajectory data without estimating the investment opportunity set.
A numerical example demonstrates  the performance of the proposed algorithm.
\\[1em]
\noindent
\textbf{Keywords:}
Mean--variance portfolio selection;
no-shorting;
reinforcement learning;
Gaussian exploration; policy evaluation; policy gradient.
\end{abstract}



\section{Introduction}
Mean--variance (MV) portfolio selection
introduced by \citet{markowitzPortfolioSelection1952} is a cornerstone of the modern quantitative finance theory and practice. The original model of \citet{markowitzPortfolioSelection1952} is static, and there has been a sizable literature
developed over the last 26 years on the continuous-time extension of the MV theory, starting from \citet{zhouContinuoustimeMeanvariancePortfolio2000}.
The main idea of \citet{zhouContinuoustimeMeanvariancePortfolio2000} is to
turn continuous-time  MV into an {\it indefinite} stochastic linear--quadratic (LQ) control problem, whose theory was developed in the 1990s (\citealt{chen1998stochastic}), leading to explicit characterization of the efficient frontier. One of the focuses of the subsequent study of continuous-time MV is to incorporate various portfolio constraints such as those of no-shorting and no-bankruptcy. 
Those constraints translate into control and state constraints, which introduce great technical difficulties because the general LQ control theory does not cope with any constraints.
New and ad hoc approaches have been developed exploiting the special features of the MV problems, including that the state (wealth) is scalar-valued.
%
%
\citet{liDynamicMeanvariancePortfolio2002} solve continuous-time MV portfolio selection when shorting is disallowed, by introducing  two Riccati equations and a viscosity solution analysis of the associated HJB equation.
\citet{huConstrainedStochasticLQ2005} further develop a general theory on cone-constrained stochastic LQ control with random coefficients,
obtain explicit optimal controls via two extended stochastic Riccati equations and apply the results to MV under no-shorting constraints.
\citet{liContinuoustimeMarkowitzsModel2016} investigate  a continuous-time Markowitz model with bankruptcy prohibition and convex cone portfolio constraints,
and derive semi-analytical solutions via a martingale approach without relying on viscosity solutions.

The study on continuous-time MV prior to the year 2020 had been entirely model-based, namely, under the assumption that the investment opportunity set (consisting of the asset dynamics coefficients) is given and accurate. However, it is well understood that it is hard, if not impossible, to accurately
estimate the moments of asset returns, especially the first moment, based on price data which are typically very limited.
On the other hand, portfolios and their objective values derived from analytical/numerical solutions of the MV problems are known to be extremely sensitive to those estimation errors.
These render the derived model-based portfolio policies often irrelevant or even misleading.

This is where reinforcement learning (RL) comes to rescue. RL studies how an agent learns to make sequential decisions from interaction with an {\it unknown} environment.
Primarily premised upon dynamic programming through value-based and temporal-difference methods,
the RL theory and algorithms have been developed mainly for problems in discrete time with discrete state and action (control) spaces;
see, for example,
the classical textbook \citet{suttonReinforcementLearningIntroduction2018}.
\citet{wangReinforcementLearningContinuous2020} are the first to formulate the continuous-time RL with possibly continuous state/action spaces, using relaxed controls and entropy regularization to model exploration when the model parameters are unknown. In particular, they prove theoretically that
in the LQ setting
the optimal exploratory feedback policy must be Gaussian. This provides the interpretability of the Gaussian exploration commonly employed in practice, which is easy to sample from, differentiate and optimize.
\citet{wangContinuoustimeMeanVariance2020} immediately apply this general theory to the continuous-time MV portfolio selection problem with an unknown investment opportunity set to
prove that the optimal exploratory policy is Gaussian with a time-decaying variance,
and establish a policy improvement theorem leading to an implementable RL algorithm.
Subsequently,
\cite{jiaPolicyEvaluationTemporaldifference2022,jiaPolicyGradientActorcritic2022,jiaQlearningContinuousTime2023} develop a systematic theory  on policy evaluation and temporal-difference learning,
policy gradient and actor--critic methods,
and continuous-time $q$-learning,
respectively.
\citet{tangExploratoryHJBEquations2022} establish the well-posedness,
regularity,
and vanishing-exploration convergence for the exploratory HJB equations.
More recently,
\citet{huangConvergencePolicyIteration2025} investigate the convergence of policy iteration for general entropy-regularized stochastic control problems, and \citet{sethiEntropyAnnealingPolicy2025} study
related algorithmic and asymptotic properties of
continuous-time policy mirror descent with entropy annealing.
\citet{guoFastPolicyLearning2026} propose a regularized policy gradient 
method for infinite-horizon LQ control and establish linear
and partly superlinear  convergence. For the continuous-time MV problem, \citet{huangmv2024}
develop a model-free, data-driven RL algorithm based on the policy gradient theory of \citet{jiaPolicyGradientActorcritic2022}, prove not only its convergence but also a {\it sublinear} regret bound in terms of the Sharpe ratio, and carry out an extensive empirical study to demonstrate its superior performance.


The relaxed or exploratory controls are in general probability distributions on action/control spaces. If action spaces are unconstrained, then the resulting exploratory problems are more likely to have better analytical structures leading to more efficient learning algorithms. For example, the optimality of Gaussian exploration
proved in \cite{wangReinforcementLearningContinuous2020}, \cite{wangContinuoustimeMeanVariance2020} and \cite{huangmv2024}
stems not only from the underlying LQ framework but also from the unconstrained action spaces.
When there are constraints the exploratory controls will be supported on the constrained action space. This likely leads to {\it truncated} Gaussian-type exploratory policies for the current problem of MV portfolio selection without shorting. Indeed, in a different setting of the Merton problem, \citet{chauContinuoustimeOptimalInvestment2026} show that the optimal exploratory policy is truncated Gaussian under interval constraints.
Such a formulation ensures that {\it every} control sampled from the exploratory policy is feasible, which however comes with a major drawback:
The HJB equation and the corresponding learning problem become substantially harder to analyze,
and the explicit structure available under Gaussian exploration is generally lost.

To overcome this difficulty,
we approach the constrained problem by imposing the constraint on the policy {\it mean} rather than on the support.
More precisely,
we keep any exploratory policy Gaussian,
but require its mean to lie in the positive half-space.
Hence the randomized policy may place some mass outside $\mathbb{R}_+^m$,
while its ``averaged''  policy is componentwise nonnegative and therefore admissible for the original problem.
We thus formulate an auxiliary exploratory problem where the class of admissible policies are those Gaussian distributions with nonnegative means and a suitable choice of variance.
To justify the relevance of this auxiliary problem vis-\`a-vis the original problem,
our main theoretical result establishes  that the mean taken from
the optimal Gaussian policy of the auxiliary problem is exactly the optimal control of the original no-shorting MV problem.
In other words,
although individual controls  sampled from an exploratory policy may not be feasible, they are for the training purpose only. The policy that is ultimately extracted and implemented from that distribution is not only admissible, but indeed is optimal.
This underpins theoretically  the connection between tractable Gaussian exploration and the original constrained problem.

It should be noted that the above idea was partially inspired by \cite{daiDatadrivenMertonsStrategies2025} where a Merton problem is studied. The problem therein does not generate Gaussian exploration if one follows the entropy-regularized formulation of \cite{wangReinforcementLearningContinuous2020}. Instead, the authors propose a different exploratory formulation {\it without} entropy, which is to randomize controls by simply adding proper zero-mean Gaussian noises to them. They then prove that the mean of their optimal exploratory policy solves the original Merton problem. The problem in \cite{daiDatadrivenMertonsStrategies2025} is unconstrained, but here we prove that the key idea can be extended to our constrained problem.

Based on the established theoretical result, we only need to solve the auxiliary exploratory problem for which existing RL theory and algorithms can be adapted to.
Indeed, we devise a model-free RL learning algorithm following martingale-based policy evaluation from \citet{jiaPolicyEvaluationTemporaldifference2022} and
policy gradient  from \citet{jiaPolicyGradientActorcritic2022}.
The algorithm  learns the optimal no-shorting portfolio policies from sample paths (i.e. data) without  estimation of the investment opportunity set. We also present a numerical example that demonstrates that the algorithm converges and learns the optimal oracle values effectively.



The rest of the paper is organized as follows.
In Section~2,
we formulate the target mean--variance problem with no-shorting constraints and unknown model parameters and introduce the auxiliary exploratory problem.
In Section~3,
we carry out a theoretical analysis on the auxiliary problem and establish its relation to the target problem.
In Section~4,
we develop a model-free RL  algorithm for solving the auxiliary (and hence the target) problem.
Section~5 presents a numerical example. Section 6 finally concludes.

\section{Problem Formulation}
In this section, we state the mean--variance problem with no-shorting constraints and unknown parameters (the \emph{target problem}),
and then formulate an associated exploratory problem with integral constraints that can be solved in a data-driven manner by an RL algorithm developed in this paper (the \emph{auxiliary problem}).
The formulation of the auxiliary problem, while originally rooted in the entropy regularization
framework of \citet{wangContinuoustimeMeanVariance2020}, is directly inspired by \citet{daiDatadrivenMertonsStrategies2025} that employs
a different way
to carry out exploration {without} involving entropy. This choice of formulation is key to treating the no-shorting constraint.

\subsection{Notation}
Frequently used notation in this paper includes:
\begin{itemize}
\item $M^\intercal$: the transpose of any matrix or vector $M$;
\item $\|M\|$: $\sqrt{\sum_{i,j} m_{ij}^2}$ for any matrix or vector $M=(m_{ij})$;
\item $\mathbb{R}^n$: \(n\)-dimensional real Euclidean space;
\item $\mathbb{R}_+^n$: the subset of  $\mathbb{R}^n$ consisting of elements with nonnegative components;
\item $\mathcal{N}(\mu, \Sigma)$: the Gaussian density function with mean $\mu$ and covariance $\Sigma$;
\item $\mathcal{N}(\mu, 0)$: the Dirac distribution centered at $\mu$;
\item $\mathcal{P}(\mathbb{R}^n)$: the set of density functions on $\mathbb{R}^n$;
\item $\mathcal{P}_+(\mathbb{R}^n)$: the subset of $\mathcal{P}(\mathbb{R}^n)$ consisting of distributions with nonnegative mean;
\item $\mathbb{S}^n_{++}$: the set of strictly positive definite matrices in $\mathbb{R}^{n \times n}$;
\item $a \geq 0$: a componentwise inequality if $a$ is a vector.
\end{itemize}

Let $T > 0$ be the investment horizon, and $\{W_t,~0\leq t \leq T\}$ be a standard
\(m\)-dimensional Brownian motion defined on a filtered probability space $(\Omega, \mathcal{F}, \mathcal{F}_{0\leq t\leq T}, \mathbb{P})$
that satisfies the usual conditions. Given a Hilbert space $\mathcal{H}$ with the norm $\|\cdot\|_\mathcal{H}$,
define the Banach space
\begin{equation*}
\begin{aligned}
\mathcal{L}^2_\mathcal{F}(0,T;\mathcal{H}) &:= \biggl\{\varphi(\cdot) \biggm| \varphi \text{ is an
\(\{\mathcal{F}_t\}_{0\leq t\leq T}\)-adapted, \(\mathcal{H}\)-valued measurable process}\\
&\hspace{6em} \text{ and }
\mathbb{E}\int_0^T\|\varphi(t)\|_\mathcal{H}^2\,dt < \infty\biggr\}
\end{aligned}
\end{equation*}
with the norm $\|\varphi\| := \sqrt{ \mathbb{E}\int_0^T \|\varphi(t)\|_\mathcal{H}^2\,dt }$.

\subsection{Mean--Variance Portfolio Selection with No-Shorting Constraints}
We consider investing $m$ risky assets whose prices are geometric Brownian motion governed by
\begin{equation*}
dS^{i}_t = S^{i}_t b^{i}_t \,dt + S^{i}_t \sum_{j=1}^m \sigma^{ij}_t\,dW^{j}_t, \quad t\in[0,T],
~S_0^{i} \in \mathbb{R},
\quad i\in\{1,2,\ldots,m\},
\end{equation*}
and one riskless asset whose price follows
\begin{equation*}
dS^{0}_t = S^{0}_t r_t\,dt, \quad t\in[0,T],~S_0^0 \in \mathbb{R}.
\end{equation*}
Denote the appreciation rate vector by $b_t := (b^{1}_t,b^{2}_t,\ldots,b^{m}_t)^\intercal$ and
the volatility matrix by $\sigma_t := (\sigma_t^{ij})$.
Throughout this paper,
we impose the following standing assumptions:
1) the coefficients $r_t$, $b_t$, and $\sigma_t$ are deterministic, measurable, and bounded functions of $t$ on $[0,T]$; and
2) there exists a constant $\delta>0$ such that $\sigma_t \sigma_t^\intercal - \delta I$ is strictly positive definite for all $t\in[0,T]$.

Let $u(\cdot)=\{u_t,~0\leq t \leq T\}$ be the portfolio (process), where
$u_t:=(u^{1}_t,u^{2}_t,\ldots,u^{m}_t)^\intercal$ with
$u^{i}_t$ being the \emph{discounted dollar value}  in the \(i\)-th risky asset at $t$.
This paper studies the case when shorting is disallowed, i.e., $u_t^{i} \geq 0,~i\in\{1,2,\ldots,m\},~t\in [0,T]$. The set of admissible portfolios is defined to be
$\mathcal{L}_{\mathcal{F}}^2(0,T; \mathbb{R}_+^m)$.
For any $u(\cdot) \in \mathcal{L}_{\mathcal{F}}^2(0,T; \mathbb{R}_+^m)$, the \emph{discounted wealth process} $x(\cdot)=\{x_t,~0\leq t \leq T\}$ satisfies
\begin{equation}
\label{eq:wealth-equation}
dx_t = u_t^\intercal \sigma_t (\rho_t\,dt + dW_t),\quad t\in[0,T],~x_0 \in \mathbb{R},
\end{equation}
where $\rho_t := \sigma_t^{-1}(b_t - r_t \mathbf{1})$ is the market price of risk at $t$.

The mean--variance model under no-shorting constraints is formulated as
\begin{equation}
\label{eq:mv-problem}
\min_{u(\cdot) \in \mathcal{L}_{\mathcal{F}}^2(0,T; \mathbb{R}_+^m)} \operatorname{Var}(x_T),
\quad \mbox{subject to (s.t.) } \quad \mathbb{E}[x_T] = z,
\end{equation}
where the wealth--portfolio pair $(x(\cdot),u(\cdot))$ satisfies \eqref{eq:wealth-equation},
and $z \in \mathbb{R}$ is a pre-specified (discounted) target expected return.

Following
\citet{zhouContinuoustimeMeanvariancePortfolio2000}, we introduce
a Lagrange multiplier $w \in \mathbb{R}$ for the equality constraint $\mathbb{E}[x_T] = z$.
The problem \eqref{eq:mv-problem} is thus transformed into a
standard stochastic optimal control problem
with a parameter $w \in \mathbb{R}$:
\begin{equation}
\label{eq:problem-w-z}
\min_{u(\cdot) \in \mathcal{L}_{\mathcal{F}}^2(0,T; \mathbb{R}_+^m)} J\bigl(u(\cdot);w\bigr):= \mathbb{E}[(x_T-w)^2] - (w-z)^2,
\quad \operatorname{s.t.} \quad \eqref{eq:wealth-equation}.
\end{equation}
For each fixed $w\in \mathbb{R}$,
let $(x^*(\cdot;w),u^*(\cdot;w))$ denote the optimal wealth--portfolio process of \eqref{eq:problem-w-z}.
Then
the correct multiplier $w^*$ can be determined by
\begin{equation}
\label{eq:def-w-star}
\mathbb{E}[x^*(T;w^*)] = z.
\end{equation}
In particular,
when $w=w^*$,
the problems \eqref{eq:mv-problem} and \eqref{eq:problem-w-z} have the same optimal portfolio  and the same optimal objective value.

The above formulation of the problem is the same as that in \citet{liDynamicMeanvariancePortfolio2002}, except now we do not know the coefficient functions (or the investment opportunity set)
$(b_t,r_t,\sigma_t)$.

\subsection{An Auxiliary Problem with Randomized Portfolios}
The key feature of this paper is that the model parameters of the problem \eqref{eq:mv-problem} are unknown. To cope with it we employ the idea of exploratory (randomized) formulation originally proposed by
\citet{wangContinuoustimeMeanVariance2020} for continuous-time RL. However, directly using the entropy-regularization technique therein runs into the problem of truncated distribution for portfolio randomization due to the no-shorting constraint, as discussed in Introduction.
Instead, we follow \citet{daiDatadrivenMertonsStrategies2025} to propose
an auxiliary problem which is solvable by RL algorithms and, more importantly, whose solution leads directly to the solution to
the target problem \eqref{eq:mv-problem}.

In the auxiliary problem, the portfolio $u(\cdot)$ is replaced by a randomized portfolio process  $\pi(\cdot)$,
which is a distribution (density-function)-valued, progressively measurable stochastic process
$\pi(\cdot):=\{\pi_t,~0\leq t \leq T\}$.
Introduce the nonnegative integral constraints on $\pi(\cdot)$:
\begin{equation*}
\int_{\mathbb{R}^m} u\,\pi_t(u)\,du \geq 0,\qquad \forall t\in[0,T].
\end{equation*}
In other words,
we require that at each time instant $t$, the distribution $\pi_t$ must have a nonnegative {\it mean}. Any such portfolio is called {\it admissible}. Note that an individual portfolio sampled from an admissible $\pi(\cdot)$ may {\it violate} the no-shorting constraint.

Recall that  $\mathcal{P}_+(\mathbb{R}^m) = \{\pi \in \mathcal{P}(\mathbb{R}^m) \mid \int_{\mathbb{R}^m} u\,\pi(u)\,du \geq 0 \}$.
Then,
the above constraint can also be written as $\pi_t \in \mathcal{P}_+(\mathbb{R}^m)\;\;\forall t\in[0,T]$. Clearly,
$\mathcal{P}(\mathbb{R}^m_+) \subset \mathcal{P}_+(\mathbb{R}^m) \subset \mathcal{P}(\mathbb{R}^m)$.

Given a randomized portfolio $\pi(\cdot)$, introduce the exploratory dynamics following  \cite{wangReinforcementLearningContinuous2020}:
\begin{equation}
\label{eq:exploratory-wealth-equation}
dx^\pi_t = \tilde{b}_t(\pi_t)\,dt + \sum_{j=1}^m\tilde{\sigma}^j_t(\pi_t)
\,dW_t^j,\quad t\in[0,T],
\end{equation}
where $\tilde{b}_t(\cdot)$ and $\tilde{\sigma}_t(\cdot)$ are functions on $\mathcal{P}(\mathbb{R}^m)$ defined by
\begin{equation*}
\tilde{b}_t(\pi) := \int_{\mathbb{R}^m} (\sigma_t\rho_t)^\intercal u\,\pi(u)\,du,\qquad
\tilde{\sigma}^j_t(\pi) := \biggl(\int_{\mathbb{R}^m}
\Bigl(\sum_{i=1}^m \sigma_t^{ij}u_i\Bigr)^2
\,\pi(u)\,du \biggr)^{1/2}.
\end{equation*}

We now specify the set of randomized feedback portfolios ({\it policies}) for our auxiliary problem.
A feedback policy $\pi: [0,T] \times \mathbb{R} \to \mathcal{P}_+(\mathbb{R}^m)$ is called {\it admissible}
if the generated process $\{\pi(t,x_t^\pi),~0\leq t \leq T\}$ is admissible (in particular it satisfies the nonnegative mean constraint), where $\{x_t^\pi,~0\leq t \leq T\}$ is the solution to \eqref{eq:exploratory-wealth-equation} with
$\pi_t$ therein substituted with $\pi(t,x_t^\pi)$.
Fix a measurable
function $\Sigma: [0,T] \times \mathbb{R} \to \mathbb{S}^{m}_{++}$, and denote by
$\Pi$ the set of admissible \emph{Gaussian} policies $\pi$ where $\pi(t,x) = \mathcal{N}(a(t, x),\Sigma(t,x))$ for some measurable function $a: [0,T] \times \mathbb{R} \to \mathbb{R}^m_+$.


The proposed auxiliary problem is
\begin{equation}
\label{eq:exploratory-problem-w-z}
\min_{\pi \in \Pi} \tilde{J}\bigl(\pi(\cdot);w\bigr)
:= \mathbb{E}[(x_T^\pi-w)^2] - (w-z)^2,
\quad \operatorname{s.t. } \quad \eqref{eq:exploratory-wealth-equation}.
\end{equation}

We highlight some important points on the auxiliary problem \eqref{eq:exploratory-problem-w-z} along with its relationship with the target problem
\eqref{eq:problem-w-z}.
First, for $\pi \in \Pi$, the distribution $\pi_t$ at any $t$ has support on the whole space instead of just the positive half plane.
Nevertheless, the resulting mean part of the policy $a_t := \int u\,\pi_t(u)\,du \geq 0$ always satisfies
the no-shorting constraint by construction. This represents a crucially different way of randomization compared to \cite{wangReinforcementLearningContinuous2020}. Therein, policy randomization is carried out on the action (control) space. Therefore, if the action space is constrained (i.e. truncated), the randomization is on a {\it truncated} space. By contrast, the randomization introduced here is on the {\it entire} space even if the action space is constrained.
Indeed, we can write  $\pi(t,x) = \mathcal{N}(a(t, x),\Sigma(t,x))=a(t,x)+\mathcal{N}(0,\Sigma(t,x))$ where $a$ is an {\it admissible} (no-shorting) policy for the target problem, namely, an admissible randomized policy is just to add some proper Gaussian noise to an originally admissible policy. Second, we will show that
applying RL we can solve the auxiliary problem \eqref{eq:exploratory-problem-w-z}, without having to know the model coefficients, to obtain an admissible Gaussian policy $\mathcal{N}(a(\cdot,\cdot);\Sigma(\cdot,\cdot))$.
Moreover, as one of the main results of this paper, we prove that under some suitable choice of the variance function $\Sigma(\cdot,\cdot)$,
   the mean  $a^*_t:=\int u\,\pi^*_t(u)\,du$ of the optimal Gaussian solution to the auxiliary problem, which automatically satisfies the no-shorting constraint,
   is optimal for the target problem.

\section{Analyzing the Auxiliary Problem}
In this section,
we present an explicit solution of the auxiliary problem \eqref{eq:exploratory-problem-w-z},
and compare it with that of the target problem \eqref{eq:problem-w-z},
in terms of the model parameters $\rho_t$ and $\sigma_t$.
A key is to exploit the freedom of choosing the variance function
$\Sigma(\cdot,\cdot)$ to relate the two problems.
These results establish the theoretical foundation for the model-free algorithm to be
introduced in the next section.

Define the optimal value function of \eqref{eq:exploratory-problem-w-z} by
\begin{equation*}
\tilde{V}(t,x;w) := \inf_{\pi \in \Pi}
\mathbb{E}[ (x^\pi_T - w)^2 - (w-z)^2 \mid x_t^\pi = x ].
\end{equation*}
The associated Hamiltonian is
\begin{equation*}
\begin{aligned}
H(t,x,\pi) &= \frac{\partial}{\partial x}\tilde{V}(t,x;w) \tilde{b}_t(\pi)
+ \frac{1}{2} \frac{\partial^2}{\partial x^2}\tilde{V}(t,x;w) \|\tilde{\sigma}_t(\pi)\|^2 \\
&= \int_{\mathbb{R}^m} \biggl(\frac{\partial}{\partial x}\tilde{V}(t,x;w) u^\intercal \sigma_t\rho_t + \frac{1}{2}
\frac{\partial^2}{\partial x^2}\tilde{V}(t,x;w) u^\intercal \sigma_t\sigma_t^\intercal u\biggr)\,\pi(u)\,du,
 \end{aligned}
\end{equation*}
where $(t,x,\pi) \in [0,T] \times \mathbb{R}\times \mathcal{P}_+(\mathbb{R}^m)$.
Hence, the HJB equation for \eqref{eq:exploratory-problem-w-z} is
\begin{equation}
\label{eq:pre-variance-regularized-HJB}
\left\{
\begin{aligned}
&\frac{\partial}{\partial t} v(t,x) + \inf_{\pi\in \mathcal{P}_+(\mathbb{R}^m)} \int \biggl(
\frac{\partial}{\partial x} v(t,x) \, u^\intercal \sigma_t \rho_t + \frac{1}{2}
\frac{\partial^2}{\partial x^2} v(t,x) u^\intercal \sigma_t \sigma_t^\intercal u \biggr)\, \pi(u)\, du = 0, \\
&v(T,x) = (x-w)^2 - (w-z)^2.
\end{aligned}
\right.
\end{equation}
However, for each $(t,x)$ the randomized control $\pi$ is of the form
$\pi = \mathcal{N}(a, \Sigma(t,x))$ with  the mean vector $a\in \mathbb{R}^m_+$ and a fixed, chosen $\Sigma(t,x)$. Substituting this form into \eqref{eq:pre-variance-regularized-HJB} we can further rewrite
the equation as
\begin{equation}
\label{eq:variance-regularized-HJB}
\left\{
\begin{aligned}
&\frac{\partial}{\partial t} v(t,x) + \inf_{a \in \mathbb{R}^m_+} \biggl(
\frac{\partial}{\partial x} v(t,x) \, a^\intercal \sigma_t \rho_t + \frac{1}{2}
\frac{\partial^2}{\partial x^2} v(t,x) a^\intercal \sigma_t \sigma_t^\intercal a \biggr) + \frac{1}{2} \frac{\partial^2}{\partial x^2} v(t,x)
\operatorname{tr}[\Sigma(t,x)\sigma_t \sigma_t^\intercal ]= 0, \\
&v(T,x) = (x-w)^2 - (w-z)^2.
\end{aligned}
\right.
\end{equation}

The solution of \eqref{eq:variance-regularized-HJB} depends on the specific choice of the variance function $\Sigma(\cdot,\cdot)$.
Note that when $\Sigma(\cdot,\cdot) \equiv 0$,
\eqref{eq:variance-regularized-HJB} reduces to the HJB equation for the target problem; see \citet{liDynamicMeanvariancePortfolio2002}.

Define $\mu^{ \Gamma_1 }_t,~\mu^{ \Gamma_2 }_t,~P^{ \Gamma_1 }_t,~P^{ \Gamma_2 }_t$ by
\begin{equation}
\label{eq:def-mu1-mu2-P1-P2}
\begin{aligned}
\mu^{ \Gamma_1 }_t &:= \operatorname*{argmin}_{\mu \in \mathbb{R}^m_+} \left[\frac{1}{2}\mu^\intercal \sigma_t\sigma_t^\intercal \mu - (\sigma_t\rho_t)^\intercal \mu\right],\\
\mu^{ \Gamma_2 }_t &:= \operatorname*{argmin}_{\mu \in \mathbb{R}^m_+}\left[ \frac{1}{2}\mu^\intercal \sigma_t\sigma_t^\intercal \mu + (\sigma_t\rho_t)^\intercal \mu\right],\\
P^{ \Gamma_1 }_t &:= \exp\biggl( \int_t^T [(\mu^{ \Gamma_1 }_s)^\intercal \sigma_s \sigma_s^\intercal(\mu^{ \Gamma_1 }_s) - 2 (\sigma_s\rho_s)^\intercal \mu^{ \Gamma_1 }_s]\,ds \biggr), \\
P^{ \Gamma_2 }_t &:= \exp\biggl( \int_t^T [(\mu^{ \Gamma_2 }_s)^\intercal \sigma_s \sigma_s^\intercal(\mu^{ \Gamma_2 }_s) + 2 (\sigma_s\rho_s)^\intercal \mu^{ \Gamma_2 }_s]\,ds \biggr).
\end{aligned}
\end{equation}
Clearly,
we have  $\mu_t^{\Gamma_1} \geq 0,~\mu_t^{\Gamma_2}\geq 0,~0 < P_t^{\Gamma_1} \leq 1$,
and $0 < P_t^{\Gamma_2} \leq 1$ for all $t$.

We now specify the  following \emph{state-dependent} variance function for exploration:
\begin{equation*}
\Sigma(t,x) = (x-w)^2\Psi_t,
\end{equation*}
where $\Psi:[0,T]\to\mathbb{S}_{++}^m$ is any given matrix-valued measurable function.

Define 
\begin{equation}
\label{eq:def-tildep1-tildep2}
\tilde{P}^{ \Gamma_1 }_t := {P}^{ \Gamma_1 }_t\exp\biggl( \int_t^T \operatorname{tr}[\Psi_s \sigma_s\sigma_s^\intercal] \,ds \biggr),
\qquad
\tilde{P}^{ \Gamma_2 }_t := {P}^{ \Gamma_2 }_t\exp\biggl( \int_t^T \operatorname{tr}[\Psi_s \sigma_s\sigma_s^\intercal] \,ds \biggr).
\end{equation}
We are going to show that
\begin{equation*}
\tilde{V}^{ \Gamma_1 }(t,x;w) := \tilde{P}_t^{ \Gamma_1 }(x-w)^2 - (w-z)^2
\end{equation*}
solves the HJB equation \eqref{eq:variance-regularized-HJB} in the region $\Gamma_1:=\{(t,x) \mid x-w < 0\}$,
and
\begin{equation*}
\tilde{V}^{ \Gamma_2 }(t,x;w) := \tilde{P}_t^{ \Gamma_2 }(x-w)^2 - (w-z)^2
\end{equation*}
solves the HJB equation \eqref{eq:variance-regularized-HJB} in the region $\Gamma_2:=\{(t,x) \mid x-w > 0\}$ respectively.

To this end, we first need the following scaling lemma.
\begin{lemma}
\label{lemma:a-scaling-lemma}
Let $\Sigma \in \mathbb{R}^{m \times m}, ~p \in \mathbb{R}^m$ and $s > 0$.
Assume that $\Sigma$ is strictly positive definite. Then,
\begin{equation*}
\min_{a \in \mathbb{R}^m_+} [\frac{1}{2} a^\intercal \Sigma a + s p^\intercal a] = s^2 \cdot
\min_{\mu \in \mathbb{R}^m_+} [\frac{1}{2} \mu^\intercal \Sigma \mu + p^\intercal \mu],
\end{equation*}
and
\begin{equation*}
\operatorname*{argmin}_{a \in \mathbb{R}^m_+} [\frac{1}{2} a^\intercal \Sigma a + s p^\intercal a] = s \cdot
\operatorname*{argmin}_{\mu \in \mathbb{R}^m_+} [\frac{1}{2} \mu^\intercal \Sigma \mu + p^\intercal \mu].
\end{equation*}
\end{lemma}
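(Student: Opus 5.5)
The plan is to prove both identities with the substitution $a = s\mu$. Since $s>0$, this map is a bijection of the cone $\mathbb{R}^m_+$ onto itself, with inverse $\mu = a/s$. Before anything else, we will check that both minima exist and that the argmins are singletons. Because $\Sigma$ is strictly positive definite, each objective is strictly convex and coercive: for example $\frac12 a^\intercal\Sigma a + s p^\intercal a \ge \frac12\lambda_{\min}(\Sigma)\|a\|^2 - s\|p\|\,\|a\|\to\infty$. Each objective is also continuous on the closed convex set $\mathbb{R}^m_+$. Hence a minimizer exists and is unique, so writing ``$\min$'' and ``$\operatorname{argmin}$'' as a single vector is legitimate.

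The key computation is the identity
\begin{equation*}
\frac12 (s\mu)^\intercal\Sigma(s\mu) + s p^\intercal (s\mu) = s^2\Bigl[\frac12\mu^\intercal\Sigma\mu + p^\intercal\mu\Bigr],\qquad \mu\in\mathbb{R}^m_+ .
\end{equation*}
Write $f(a):=\frac12 a^\intercal\Sigma a + s p^\intercal a$ and $g(\mu):=\frac12\mu^\intercal\Sigma\mu+p^\intercal\mu$. The identity says $f(s\mu)=s^2 g(\mu)$. Because $a\mapsto a/s$ is a bijection of $\mathbb{R}^m_+$, we get
\begin{equation*}
\inf_{a\in\mathbb{R}^m_+} f(a)=\inf_{\mu\in\mathbb{R}^m_+} f(s\mu)=s^2\inf_{\mu\in\mathbb{R}^m_+} g(\mu),
\end{equation*}
where the last step uses $s^2>0$. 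This gives the first claim.

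For the argmin, let $\mu^*$ be the unique minimizer of $g$. Then for every $a\in\mathbb{R}^m_+$,
\begin{equation*}
f(a)=s^2 g(a/s)\ge s^2 g(\mu^*)=f(s\mu^*),
\end{equation*}
so $s\mu^*$ minimizes $f$. By uniqueness it is the argmin, which is the second claim. Without uniqueness the same argument would still give equality of the argmin sets as sets.

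No step is a real obstacle. The only points needing care are that $s>0$, so that scaling preserves the cone and does not flip the inequality, and the existence and uniqueness of the minimizers, which the positive definiteness of $\Sigma$ provides.
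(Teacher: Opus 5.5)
Your argument is correct, but it takes a different route from the paper's. The paper works through optimality conditions. It writes the KKT system $\lambda^\intercal a = 0$, $\lambda = \Sigma a + sp$ with $(a,\lambda)\in\mathbb{R}^m_+\times\mathbb{R}^m_+$ and observes that dividing a KKT pair $(a_s,\lambda_s)$ by $s$ gives a KKT pair for $s=1$. It then invokes the convex-analysis equivalence between KKT pairs and minimizers, together with uniqueness from positive definiteness, to conclude that $a_s/s$ is the $s=1$ minimizer. The value identity then follows from $f(a_s;s)=s^2 f(a_s/s;1)$, which is your key identity evaluated only at the minimizer.

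You instead scale the objective itself, using that $a\mapsto a/s$ maps the cone $\mathbb{R}^m_+$ onto itself. This is more elementary and more general. It needs neither the sufficiency and necessity of KKT conditions nor even convexity. It only uses that the feasible set is invariant under positive scaling and that the objective is a degree-two homogeneous term plus a linear term. As you note, the infimum identity then holds without existence of minimizers, and the argmin identity holds as sets without uniqueness. Your coercivity argument also supplies the existence and uniqueness of the minimizer, which the paper simply asserts.

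In exchange, the paper's route also shows how the multiplier scales, namely $\lambda_s = s\lambda_1$. It also ties the lemma to the complementarity characterization of $\mu^{\Gamma_i}_t$, which is what one would use to compute these vectors explicitly.
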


\begin{proof}
We apply the standard Karush--Kuhn--Tucker (KKT) condition.
Denote $f(a;s) = \frac{1}{2}a^\intercal \Sigma a + sp^\intercal a$.
Since $\Sigma$ is strictly positive definite, $f(\cdot;s)$ has a unique minimizer on
$\mathbb{R}^m_+$.

Consider the convex minimization problem with linear inequality constraints
\begin{equation}
\label{eq:appendix-min-f-a-s}
\min_{a \in \mathbb{R}^m}~~ f(a;s), \quad \operatorname{s.t.} \quad a \geq 0.
\end{equation}
Recall that a pair $(a, \lambda) \in \mathbb{R}^m_+ \times \mathbb{R}^m_+$ is said to be
a KKT pair of \eqref{eq:appendix-min-f-a-s} if
\begin{equation}
\label{eq:appendix-standard-KKT}
\lambda^\intercal a = 0, \qquad \lambda = \Sigma a + sp.
\end{equation}
By standard results in convex analysis
(see, e.g., \citealt[pp. 224--226]{boydConvexOptimization2023}),
a pair $(a, \lambda)$ is a KKT pair of \eqref{eq:appendix-min-f-a-s}
if and only of $a$ is its minimizer.

Fix $s > 0$. Let $(a_s,\lambda_s) \in \mathbb{R}^m_+ \times \mathbb{R}^m_+$ be a KKT pair of \eqref{eq:appendix-min-f-a-s}.
Then,
\begin{equation*}
\Bigl(\frac{\lambda_s}{s}\Bigr)^\intercal \Bigl(\frac{a_s}{s}\Bigr)=0,\qquad
\frac{\lambda_s}{s} = \Sigma \frac{a_s}{s} + p.
\end{equation*}
This shows that $(\frac{a_s}{s},\frac{\lambda_s}{s}) \in \mathbb{R}^m_+ \times \mathbb{R}^m_+$
is a KKT pair of \eqref{eq:appendix-min-f-a-s} at $s=1$. Hence,
\begin{equation*}
a_s = \operatorname*{argmin}_{a \in \mathbb{R}^m_+} f(a;s),\qquad
\frac{a_s}{s} = \operatorname*{argmin}_{a \in \mathbb{R}^m_+} f(a;1).
\end{equation*}
This proves the second claim of the lemma.
The first claim is obtained directly from the fact that $f(a_s,s) = s^2 f( \frac{a_s}{s},1)$.
\end{proof}

Now consider the region $\Gamma_1$ where $x - w < 0$.
Substituting $\frac{\partial^2}{\partial x^2}\tilde{V}^{ \Gamma_1 }(t,x;w) = 2\tilde{P}_t^{ \Gamma_1 } > 0$
and $\frac{\partial}{\partial x}\tilde{V}^{ \Gamma_1 }(t,x;w) = 2\tilde{P}_t^{ \Gamma_1 }(x-w)$ into the left-hand side of  \eqref{eq:variance-regularized-HJB} yields
\begin{equation*}
\begin{aligned}
\mathrm{LHS}
&= \dot{\tilde{P}}^{\Gamma_1}_t(x-w)^2 +2\tilde{P}^{\Gamma_1}_t \inf_{a\in\mathbb{R}^m_+} \biggl( \frac12 a^\intercal\sigma_t\sigma_t^\intercal a + (x-w)a^\intercal\sigma_t\rho_t \biggr) + \tilde{P}^{\Gamma_1}_t(x-w)^2\operatorname{tr}[\Psi_t\sigma_t\sigma_t^\intercal] \\
&= \dot{\tilde{P}}^{\Gamma_1}_t(x-w)^2 +2\tilde{P}^{\Gamma_1}_t \inf_{a\in\mathbb{R}^m_+} \biggl( \frac12 a^\intercal\sigma_t\sigma_t^\intercal a -|x-w|(\sigma_t\rho_t)^\intercal a \biggr) +\tilde{P}^{\Gamma_1}_t(x-w)^2\operatorname{tr}[\Psi_t\sigma_t\sigma_t^\intercal].
\end{aligned}
\end{equation*}
Applying Lemma \ref{lemma:a-scaling-lemma} with the scaling parameter $|x-w|$, we obtain
\begin{equation*}
\begin{aligned}
\mathrm{LHS}
= \biggl[ \dot{\tilde{P}}^{\Gamma_1}_t +\tilde{P}^{\Gamma_1}_t\Bigl( (\mu_t^{\Gamma_1})^\intercal\sigma_t\sigma_t^\intercal\mu_t^{\Gamma_1} -2(\sigma_t\rho_t)^\intercal\mu_t^{\Gamma_1} +\operatorname{tr}\{\Psi_t\sigma_t\sigma_t^\intercal\} \Bigr) \biggr](x-w)^2
= 0.
\end{aligned}
\end{equation*}
This verifies the solution in $\Gamma_1$. The case for $\Gamma_2$ is completely analogous and omitted here.

Next, we are to show that the optimal value function of the auxiliary problem is
\begin{equation}
\label{eq:solution-of-variance-regularized-HJB}
\tilde{V}(t,x;w) = \begin{cases}
\tilde{P}^{ \Gamma_1 }_t (x-w)^2 - (w-z)^2,&\quad \text{ if } x - w < 0,\\
\tilde{P}^{ \Gamma_2 }_t (x-w)^2 - (w-z)^2,&\quad \text{ if } x - w > 0,\\
-(w-z)^2,&\quad \text{ if } x - w = 0.
\end{cases}
\end{equation}
In fact, this function is continuously differentiable on  $\Gamma_3:=\{(t,x)\mid x=w\}$
because
\begin{equation}
\label{eq:continously-differentiable}
\begin{aligned}
\lim_{x\to w^-}\tilde{V}(t,x;w)&=\lim_{x\to w^+}\tilde{V}(t,x;w)=-(w-z)^2, \\
\lim_{x\to w^-} \frac{\partial}{\partial x} \tilde{V}(t,x;w)&=\lim_{x\to w^+} \frac{\partial}{\partial x} \tilde{V}(t,x;w)=0.
\end{aligned}
\end{equation}
It also clearly satisfies the boundary condition of the HJB equation \eqref{eq:variance-regularized-HJB} as $\tilde{P}^{\Gamma_1}_T=\tilde{P}^{\Gamma_2}_T=1$.
Furthermore,
\eqref{eq:solution-of-variance-regularized-HJB} satisfies \eqref{eq:variance-regularized-HJB} in the viscosity sense; see the proof of Theorem \ref{theorem:state-dependent} below and Appendix~\ref{appendix:definition-of-viscosity-solution} for the precise definition of viscosity solutions. Thus the standard uniqueness result of viscosity solution establishes that \eqref{eq:solution-of-variance-regularized-HJB} is the value function of the auxiliary problem.

It follows that we can obtain the optimal mean strategy $a^*$ by minimizing the Hamiltonian
\begin{equation*}
\begin{aligned}
a^*(t,x;w) &= \operatorname*{argmin}_{a \in \mathbb{R}^m_+}
\biggl( \frac{\partial}{\partial x}\tilde{V}(t,x;w) a^\intercal \sigma_t\rho_t + \frac{1}{2}
\frac{\partial^2}{\partial x^2}\tilde{V}(t,x;w) a^\intercal \sigma_t\sigma_t^\intercal a
\biggr) \\
&= \operatorname*{argmin}_{a \in \mathbb{R}^m_+}
\biggl(\frac{1}{2} a^\intercal \sigma_t\sigma_t^\intercal a + \frac{
\frac{\partial}{\partial x}\tilde{V}(t,x;w)}{ \frac{\partial^2}{\partial x^2}\tilde{V}(t,x;w)} a^\intercal \sigma_t\rho_t \biggr) \\
&= \operatorname*{argmin}_{a \in \mathbb{R}^m_+}
\biggl(\frac{1}{2} a^\intercal \sigma_t\sigma_t^\intercal a + (x-w) a^\intercal \sigma_t\rho_t \biggr) \\
&= \begin{cases}
-(x-w)\mu_t^{ \Gamma_1 },&\quad \text{ if } x - w < 0,\\
(x-w)\mu_t^{ \Gamma_2 },&\quad \text{ if } x - w \geq 0.
\end{cases}
\end{aligned}
\end{equation*}

The obtained optimal mean strategy $a^*$ turns out to be identical to the theoretically  optimal strategy $u^*$ of the target problem, derived  in \citet{liDynamicMeanvariancePortfolio2002}:
\begin{equation}
\label{eq:optimal-u-target}
u^*(t,x;w) = \begin{cases}
-(x-w)\mu_t^{ \Gamma_1 },&\quad \text{ if } x - w < 0,\\
(x-w)\mu_t^{ \Gamma_2 },&\quad \text{ if } x - w \geq 0.
\end{cases}
\end{equation}

We arrive at the following theorem.

\begin{theorem}
\label{theorem:state-dependent}
Let $\Psi:[0,T] \to \mathbb{S}^m_{++}$ be a matrix-valued measurable function, and
$\Pi$ be the policy space defined in \eqref{eq:exploratory-problem-w-z}
with the variance function set to be $\Sigma(t,x) = (x-w)^2\Psi_t$.
Then,
the HJB equation \eqref{eq:variance-regularized-HJB} has a unique viscosity solution \eqref{eq:solution-of-variance-regularized-HJB} which is also the optimal value function of the auxiliary problem \eqref{eq:exploratory-problem-w-z}.
Moreover, the optimal strategy of \eqref{eq:exploratory-problem-w-z} is
\begin{equation}
\pi^*(t,x;w) = \begin{cases}
\mathcal{N}\bigl(-(x-w)\mu_t^{ \Gamma_1 }, (x-w)^2\Psi_t\bigr),&\quad \text{ if } x - w < 0,\\
\mathcal{N}\bigl((x-w)\mu_t^{ \Gamma_2 }, (x-w)^2\Psi_t\bigr),&\quad \text{ if } x - w \geq 0,
\end{cases}
\end{equation}
where $\mu^{ \Gamma_1 }_t$ and $\mu^{ \Gamma_2 }_t$ are given by \eqref{eq:def-mu1-mu2-P1-P2}.
Furthermore,
the mean strategy $a^*(t,x;w) := \int_{\mathbb{R}^m} u \pi^*(u \mid t, x;w) \,du$
is optimal to the target problem \eqref{eq:problem-w-z}.
\end{theorem}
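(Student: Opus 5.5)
The plan has four steps: check the viscosity property, invoke uniqueness, run a verification argument for the optimal policy, and compare with the target problem.

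\textbf{Viscosity property.} The classical computation preceding the theorem shows that $\tilde V$ in \eqref{eq:solution-of-variance-regularized-HJB} is a classical solution of \eqref{eq:variance-regularized-HJB} in $\Gamma_1$ and in $\Gamma_2$. The case $\Gamma_2$ is analogous: there $x-w=|x-w|$, and Lemma~\ref{lemma:a-scaling-lemma} produces $\mu^{\Gamma_2}_t$ and $P^{\Gamma_2}_t$. The terminal condition holds because $\tilde P^{\Gamma_i}_T=1$, and \eqref{eq:continously-differentiable} shows $\tilde V$ is $C^1$ across $\Gamma_3$. So only the line $x=w$ needs checking. Take a smooth test function $\varphi$ touching $\tilde V$ from above (resp.\ below) at $(t_0,w)$. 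Since $\tilde V$ is $C^1$, we get $\varphi_x(t_0,w)=0$ and $\varphi_t(t_0,w)=\partial_t\tilde V(t_0,w)=0$; the latter holds because $\tilde V(t,w)\equiv -(w-z)^2$, with the $t$-derivative understood a.e.\ or through the one-sided $C^1$ structure in $t$. The exploration term vanishes there, since $\Sigma(t_0,w)=0$. The Hamiltonian then reduces to
\[
\inf_{a\ge0}\tfrac12\varphi_{xx}\,a^\intercal\sigma\sigma^\intercal a,
\]
which equals $0$ if $\varphi_{xx}\ge0$ and $-\infty$ otherwise. For the subsolution test, $\varphi\ge\tilde V$ near $(t_0,w)$ with equality at that point. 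Because $\tilde V-\tilde V(t_0,w)\ge 0$ is quadratic in $x-w$, the left-hand side is $0+\inf(\cdots)\le 0$, so the subsolution inequality holds automatically. For the supersolution test, $\varphi\le\tilde V$ and $\tilde V$ is bounded above by $\max(\tilde P^{\Gamma_1},\tilde P^{\Gamma_2})(x-w)^2$ near $x=w$. I expect to need $\varphi_{xx}\le 2\max\tilde P$, but also to rule out $\varphi_{xx}<0$, where the infimum is $-\infty$. This is the delicate point. As in \citet{liDynamicMeanvariancePortfolio2002}, I would handle it by restricting the Hamiltonian to the relevant compact set, or by adopting the paper's viscosity definition from the appendix (continuous Hamiltonian via the relaxed form in the $\Gamma_3$ region). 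I would mirror their treatment, which is exactly the $\Sigma\equiv0$ case, because the extra term $\tfrac12\varphi_{xx}(x-w)^2\operatorname{tr}[\Psi\sigma\sigma^\intercal]$ vanishes on $\Gamma_3$ and is continuous.

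\textbf{Uniqueness.} The growth of $\tilde V$ is quadratic, and the coefficients $a\mapsto a^\intercal\sigma\rho$ and $a^\intercal\sigma\sigma^\intercal a+(x-w)^2\operatorname{tr}[\Psi\sigma\sigma^\intercal]$ are Lipschitz in $x$ on the relevant feedback class. I would therefore invoke the standard comparison principle for viscosity solutions with quadratic growth. This gives that \eqref{eq:solution-of-variance-regularized-HJB} is the unique viscosity solution, hence equal to the value function by the dynamic programming principle.

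\textbf{Verification.} Independently of the viscosity route, I would verify optimality directly. Under the feedback $\pi^*$, the exploratory SDE \eqref{eq:exploratory-wealth-equation} becomes linear in $y_t=x_t-w$, with drift $|y|$ times a bounded coefficient and diffusion $|y|$ times a bounded coefficient. It is therefore well posed, and the sign of $y$ is preserved: if $y_0=0$ then $y\equiv0$, and otherwise $y$ is a geometric-type process. Applying Itô's formula to $\tilde V(t,x_t^\pi)$ for an arbitrary $\pi\in\Pi$ is legitimate because $\tilde V$ is $C^1$ with piecewise-continuous, bounded $\tilde V_{xx}$ (Itô--Tanaka/Meyer suffices). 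Together with the HJB inequality on $\Gamma_1\cup\Gamma_2$, this gives $\tilde J(\pi)\ge\tilde V(0,x_0)$, with equality for $\pi^*$ because $a^*$ attains the infimum (Lemma~\ref{lemma:a-scaling-lemma}). The local martingale terms are true martingales by square-integrability from the linear growth.

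\textbf{Link to the target problem.} Finally, compare the mean $a^*=\int u\,\pi^*\,du$ with \eqref{eq:optimal-u-target}, the optimal control from \citet{liDynamicMeanvariancePortfolio2002}. The two coincide formula for formula, so $a^*$ is optimal for \eqref{eq:problem-w-z}. The main obstacle is the viscosity supersolution check at $x=w$, where the unbounded Hamiltonian is handled.
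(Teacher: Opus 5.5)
Your viscosity check on $\Gamma_3$ has the two test inequalities reversed, and this is a real gap. The equation $v_t+\inf_{a\ge0}G(t,x,a,v_x,v_{xx})=0$ is equivalent to $-v_t-\inf_{a\ge0}G=0$, whose operator is nonincreasing in $P$ because $G$ is nondecreasing in $P$. So the correct conditions are:
\begin{itemize}
\item subsolution: $\varphi_t+\inf_{a\ge0}G\ge0$ when $\varphi$ touches from above;
\item supersolution: $\varphi_t+\inf_{a\ge0}G\le0$ when $\varphi$ touches from below.
\end{itemize}
This is the convention in the paper's appendix. It is also the one compatible with comparison: a smooth $v$ with $v_t+\inf_a G\ge0$ makes $v(t,x_t)$ a submartingale under every policy, so it lies below the value function.

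You verified ``$\le0$'' for tests from above and are trying to prove ``$\ge0$'' for tests from below. Under your convention the supersolution property is simply false. Take $\varphi(t,x)=-(w-z)^2-(x-w)^2$. It touches $\tilde V$ from below at every $(t_0,w)$, has $\varphi_t=\varphi_x=0$ and $\varphi_{xx}=-2$ there, and gives $\inf_{a\ge0}G=-\infty$. So $\varphi_{xx}<0$ cannot be ruled out. Restricting the controls to a compact set (which changes the equation) or mirroring the $\Sigma\equiv0$ analysis will not rescue the step. As written, the proposal does not establish that \eqref{eq:solution-of-variance-regularized-HJB} is a viscosity solution.

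With the correct convention the check at $x=w$ is immediate, and you already have both ingredients; they are just attached to the wrong tests.
\begin{itemize}
\item Test from below: $q=p=0$, and $\inf_{a\ge0}G(t_0,w,a,0,P)\le G(t_0,w,0,0,P)=0$ for every $P$, including when the infimum is $-\infty$. Hence $q+\inf G\le0$.
\item Test from above: on each side of $w$ you have $\tilde V(t_0,x)-\tilde V(t_0,w)=\tilde P^{\Gamma_i}_{t_0}(x-w)^2$, and $\varphi_x(t_0,w)=0$. Together these force $P=\varphi_{xx}(t_0,w)\ge 2\max_i\tilde P^{\Gamma_i}_{t_0}>0$. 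The infimum is then attained at $a=0$, so $q+\inf G=0\ge0$.
\end{itemize}
This is exactly the paper's argument via $D^{1,2,\pm}_{t,x}$ and the monotonicity of $G$ in $P$.

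Your It\^o--Tanaka verification step, by contrast, is a genuinely different route that the paper does not take, and it is worth keeping. $\tilde V$ is $C^1$ in $x$ with $\tilde V_t=\tilde V_x=0$ on $\{x=w\}$, and the occupation-time formula kills the second-order term there. This yields $\tilde J(\pi)\ge\tilde V(0,x_0)=\tilde J(\pi^*)$ directly, given enough integrability of the mean $a$ to pass from the localized inequality to time $T$. It identifies the value function and the optimality of $\pi^*$ without any comparison principle.

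That matters here. The standard comparison results that you and the paper both cite assume coefficients bounded uniformly in the control. Here $a$ ranges over $\mathbb{R}^m_+$ and the Hamiltonian can be $-\infty$. The uniqueness clause of the theorem therefore stays only at the paper's level of rigor in your proposal.
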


\begin{proof}
Let $\tilde{V}$ be the piecewise quadratic function defined by \eqref{eq:solution-of-variance-regularized-HJB}, which clearly satisfies
the terminal condition.

\emph{1. Verification in the interior regions $\Gamma_i$ ($i=1,2$).}
When $(t,x) \in \Gamma_i$,
we have
\begin{equation*}
\tilde{V}(t,x;w) = \tilde{P}^{\Gamma_i}_t (x-w)^2 - (w-z)^2.
\end{equation*}
Thus,
\begin{equation*}
\frac{\partial \tilde{V}}{\partial t} = \dot{\tilde{P}}^{\Gamma_i}_t (x-w)^2, \quad \frac{\partial \tilde{V}}{\partial x} = 2\tilde{P}^{\Gamma_i}_t (x-w), \quad \frac{\partial^2 \tilde{V}}{\partial x^2} = 2\tilde{P}^{\Gamma_i}_t.
\end{equation*}
Substituting these into the left-hand side of \eqref{eq:variance-regularized-HJB} yields
\begin{equation*}
\begin{aligned}
\text{LHS} &= \dot{\tilde{P}}^{\Gamma_i}_t (x-w)^2 + \inf_{a \in \mathbb{R}^m_+} \biggl\{ 2\tilde{P}^{\Gamma_i}_t (x-w) a^\intercal \sigma_t \rho_t + \tilde{P}^{\Gamma_i}_t a^\intercal \sigma_t \sigma_t^\intercal a \biggr\} + \tilde{P}^{\Gamma_i}_t (x-w)^2\text{tr}\{\Psi_t \sigma_t \sigma_t^\intercal\} \\
&= (x-w)^2 [ \dot{\tilde{P}}^{\Gamma_i}_t + \tilde{P}^{\Gamma_i}_t \text{tr}\{\Psi_t \sigma_t \sigma_t^\intercal\}] + 2\tilde{P}^{\Gamma_i}_t \inf_{a \in \mathbb{R}^m_+} \biggl\{ (x-w) a^\intercal \sigma_t \rho_t + \frac{1}{2} a^\intercal \sigma_t \sigma_t^\intercal a \biggr\}.
\end{aligned}
\end{equation*}
Since $\tilde{P}^{\Gamma_i}_t = P^{\Gamma_i}_t \exp\bigl( \int_t^T \text{tr}\{\Psi_s \sigma_s \sigma_s^\intercal\} \, ds \bigr)$,
we have
\begin{equation*}
\dot{\tilde{P}}^{\Gamma_i}_t + \tilde{P}^{\Gamma_i}_t \text{tr}\{\Psi_t \sigma_t \sigma_t^\intercal\}
= \dot{P}^{\Gamma_i}_t \exp\biggl( \int_t^T \text{tr}\{\Psi_s \sigma_s \sigma_s^\intercal\} \, ds \biggr)
= \dot{P}^{\Gamma_i}_t \frac{\tilde{P}^{\Gamma_i}_t}{P^{\Gamma_i}_t}.
\end{equation*}
Therefore,
\begin{equation*}
\text{LHS} = \frac{\tilde{P}^{\Gamma_i}_t}{P^{\Gamma_i}_t} \biggl[ \dot{P}^{\Gamma_i}_t (x-w)^2 + 2P^{\Gamma_i}_t \inf_{a \in \mathbb{R}^m_+} \biggl\{ (x-w) a^\intercal \sigma_t \rho_t + \frac{1}{2} a^\intercal \sigma_t \sigma_t^\intercal a \biggr\} \biggr].
\end{equation*}
The term inside the brackets is exactly the HJB equation for the target problem
and thus equals zero.
This verifies that $\tilde{V}$ satisfies the HJB equation in the regions $\Gamma_i$ ($i=1,2$).

\emph{2. Verification on the boundary $\Gamma_3$.}
\eqref{eq:continously-differentiable} ensures that both $\tilde{V}$ and $\frac{\partial}{\partial x}\tilde{V}$ at points on $\Gamma_3$.
Moreover,
we have
\begin{equation*}
\begin{aligned}
\lim_{x \to w^-} \frac{\partial^2}{\partial x^2} \tilde{V}(t,x;w) &= \tilde{P}^{\Gamma_1}_t,\\
\lim_{x \to w^+} \frac{\partial^2}{\partial x^2} \tilde{V}(t,x;w) &= \tilde{P}^{\Gamma_2}_t,
\end{aligned}
\end{equation*}
implying that for all $(t,x) \in \Gamma_3$,
\begin{equation*}
\begin{aligned}
D^{1,2,+}_{t,x}\tilde{V}(t,x) &= \{ 0 \} \times \{0\} \times [\tilde{P}^{\Gamma_2}_t, \infty),\\
D^{1,2,-}_{t,x}\tilde{V}(t,x) &= \{ 0 \} \times \{0 \} \times (-\infty, \tilde{P}^{\Gamma_1}_t].
\end{aligned}
\end{equation*}
Therefore,
for any $(t,x) \in \Gamma_3$ and $(q,p,P) \in D^{1,2,+}_{t,x}\tilde{V}(t,x)$,
\begin{equation*}
q + \inf_{a \geq 0} G(t,x,a,p,P) \geq 0 + \inf_{a \geq 0}G(t,x,a,0,\tilde{P}^{\Gamma_2}_t) = 0,
\end{equation*}
and
for any $(t,x) \in \Gamma_3$ and $(q,p,P) \in D^{1,2,-}_{t,x}\tilde{V}(t,x)$,
\begin{equation*}
q + \inf_{a \geq 0} G(t,x,a,p,P) \leq 0 + \inf_{a \geq 0}G(t,x,a,0,\tilde{P}^{\Gamma_1}_t) = 0.
\end{equation*}
This establishes that $\tilde V$ is a viscosity solution of \eqref{eq:variance-regularized-HJB}. The uniqueness of the viscosity solution is standard; see
e.g. \citet{yongStochasticControlsHamiltonian1999}.
\end{proof}

Several remarks are in order.
First off, in the next section we will show that the randomized auxiliary problem \eqref{eq:exploratory-problem-w-z} with unknown parameters can be solved algorithmically using the recently developed continuous-time RL theory. Theorem \ref{theorem:state-dependent}, on the other hand, indicates that the mean of the optimal Gaussian policy recovers the solution to the original target problem \eqref{eq:problem-w-z}. Crucially, because the mean of any admissible Gaussian policy is nonnegative by construction, it satisfies the no-shorting constraint of \eqref{eq:problem-w-z}.
Second, the function $\Psi_t$ does not affect the optimal mean policy;
it controls only the exploration level via the variance of the underlying Gaussian policy.
Finally, both the mean and the variance of $\pi^*(t,x;w)$ vanish as $x$ approaches $w$, and at $x=w$ the policy degenerates to Dirac at 0 (i.e. completely no risky allocation) and, as a result, the (discounted) wealth stays put at $x=w$. In other words,
$x = w$ is a ``dividing boundary'' of the state space under $\pi^*$,
preventing any admissible wealth trajectory from crossing between $\Gamma_1$ and $\Gamma_2$.

\section{Solving Portfolio Selection with Unknown Dynamics}
In this section, we develop a model-free algorithm to learn the optimal policy of the target portfolio selection problem \eqref{eq:problem-w-z} without knowing the investment opportunity set.
Based on the theoretical results presented in the previous section,
it is equivalent to searching for the solution  of the auxiliary, exploratory problem \eqref{eq:exploratory-problem-w-z}.
To do so we apply a trajectory-based (i.e. data-driven) policy gradient estimator and update our policy within the feasible set via the proximal gradient descent method.

\subsection{Parameterization and Policy Evaluation}
We take $\Sigma(t,x) = (x-w)^2\Psi_0$ where $\Psi_0 \in \mathbb{S}^n_{++}$
is a fixed matrix.
In implementation, one can choose  $\Psi_0=\lambda I_m$ where $\lambda$ is a tuning hyperparameter controlling the exploration strength.
Consider the following parameterized policy:
\begin{equation}
\label{eq:parameterized-pi-phi}
\pi^\phi(t,x;w) = \mathcal{N}\bigl(u^\phi(t,x;w), (x-w)^2\Psi_0\bigr),
\end{equation}
where $u^\phi(t,x;w)$ is a parameterized function that takes value in $\mathbb{R}^m_+$.

In general, one can use feedforward neural networks to represent  $u^\phi$ and set the last
layer to a ReLU operation to ensure the nonnegativity (see, e.g., \citealt{bishopDeepLearningFoundations2024}).
In this paper,
however,
we use a piecewise linear function to represent $u^\phi$, inspired by the theoretical structure of
the optimal control \eqref{eq:optimal-u-target}. Specifically, the parameterized function
is set to be
\begin{equation}
\label{eq:parameterized-mean-policy}
u^\phi(t,x;w) = \begin{cases}
-(x-w) \phi_t^{(1)},&\quad \text{ if } x - w < 0,\\
(x-w) \phi_t^{(2)},&\quad \text{ if } x - w \geq 0.
\end{cases}
\end{equation}
For numerical implementation, the time interval $[0,T]$ is discretized into a finite grid
$\{i\frac{T}{N},~~i=0,1,2,\ldots N\}$. Let the values of the functions $\phi^{(1)}$ and $\phi^{(2)}$ at the time instant $t_i = i \frac{T}{N}$ be
$\phi_i^{(1)} \in \mathbb{R}^m_+$  and $\phi_i^{(2)} \in \mathbb{R}^m_+$ respectively. Hence, there are $2N+2$ vectors to be optimized.\footnote{If the portfolio selection problem is  time invariant, then they reduce to two vectors, as the coefficients are identical for all $t_i$.}
With a slight abuse of notation, henceforth we use $\phi$ to denote both the function $\phi:=(\phi^{(1)},\phi^{(2)})$ and the sequence of vectors $\phi:=\{\phi^{(1)}_i,\phi^{(2)}_i\}_{i=0}^N$.

The value function (i.e. cost function) of $\pi^\phi$ is 
\begin{equation}
\label{eq:value-function-parameterized-policy}
{V}^\phi(t,x;w) = \mathbb{E}[(x^\phi_T - w)^2 - (w-z)^2 \mid x^\phi_t = x]
\end{equation}
where $x^\phi$ is the solution of the exploratory dynamics \eqref{eq:exploratory-wealth-equation} under $\pi^\phi$.
The piecewise linear policy \eqref{eq:parameterized-mean-policy} suggests
an explicit way to parameterize ${V}^\phi$. Indeed,
by the Feynman--Kac formula,
the right hand side of  \eqref{eq:value-function-parameterized-policy} satisfies the PDE
\begin{equation*}
\left\{
\begin{aligned}
&\frac{\partial}{\partial t} v(t,x) +
\frac{\partial}{\partial x} v(t,x) \, (\sigma_t \rho_t)^\intercal u^\phi(t,x;w) + \frac{1}{2}
\frac{\partial^2}{\partial x^2} v(t,x) (u^\phi(t,x;w))^\intercal \sigma_t \sigma_t^\intercal u^\phi(t,x;w) \\
&\hspace{8em} + \frac{1}{2} \frac{\partial^2}{\partial x^2} v(t,x)
\operatorname{tr}[\Psi_0\sigma_t \sigma_t^\intercal ](x-w)^2= 0, \\
&v(T,x) = (x-w)^2 - (w-z)^2.
\end{aligned}
\right.
\end{equation*}
Solving it yields
\begin{equation}
\label{eq:theoretical-V-phi}
{V}^\phi(t,x;w) = \begin{cases}
e^{\theta^{ \Gamma_1 }_t}(x-w)^2 - (w-z)^2, &\quad \text{ if } x - w < 0,\\
e^{\theta^{ \Gamma_2} _t}(x-w)^2 - (w-z)^2, &\quad \text{ if } x - w > 0,\\
-(w-z)^2, &\quad \text{ if } x - w = 0,
\end{cases}
\end{equation}
where $\theta^{ \Gamma_1 }_t$ and $\theta^{ \Gamma_2 }_t$ are given, theoretically,  by
\begin{equation}
\label{eq:theoretical-theta-values}
\begin{aligned}
\theta^{ \Gamma_1 }_t &:= \int_t^T\biggl( (\phi^{(1)}_s)^\intercal \sigma_s\sigma_s^\intercal (\phi^{(1)}_s) - 2(\sigma_s\rho_s)^\intercal \phi^{(1)}_s
 + \operatorname{tr}[\Psi_0\sigma_s\sigma_s^\intercal]\biggr)\,ds,\\
\theta^{ \Gamma_2 }_t &:= \int_t^T\biggl( (\phi^{(2)}_s)^\intercal \sigma_s\sigma_s^\intercal (\phi^{(2)}_s) + 2(\sigma_s\rho_s)^\intercal \phi^{(2)}_s
 + \operatorname{tr}[\Psi_0\sigma_s\sigma_s^\intercal]\biggr)\,ds.
\end{aligned}
\end{equation}


We do not know the functions $\theta^{ \Gamma_1 }$ and $\theta^{ \Gamma_2 }$  due to the unknown model parameters; so we parameterize them.
Following the parameterization used for $\phi^{(1)}$ and $\phi^{(2)}$, we discretize the trajectories
$\{\theta^{ \Gamma_1 }_t,\theta^{ \Gamma_2 }_t\}_{0 \leq t \leq T}$ using $2N+2$ scalars $\{\theta^{(1)}_i,\theta^{(2)}_i\}_{i=0}^N$.
This is equivalent to  the parameterization of the value function in the form
\begin{equation}
\label{eq:parameterized-value-function}
V^\theta(t,x;w) = \begin{cases}
e^{\theta^{ (1) }_t}(x-w)^2 - (w-z)^2, &\quad \text{ if } x - w < 0,\\
e^{\theta^{ (2)} _t}(x-w)^2 - (w-z)^2, &\quad \text{ if } x - w > 0,\\
-(w-z)^2, &\quad \text{ if } x - w = 0.
\end{cases}
\end{equation}

As with $\phi$, we use $\theta$ to denote interchangeably both the function $\theta:=(\theta^{(1)},\theta^{(2)})$ and the sequence of vectors $\theta:=\{\theta^{(1)}_i,\theta^{(2)}_i\}_{i=0}^N$.
We now apply the martingale loss method proposed in \citet{jiaPolicyEvaluationTemporaldifference2022} to perform the policy evaluation, in order to learn $(\theta^{(1)},\theta^{(2)})$ or equivalently the vectors  $\{\theta^{(1)}_i,\theta^{(2)}_i\}_{i=0}^N$ under a given $\pi^\phi$.
Let $(x(\cdot),u(\cdot))$ be the wealth--portfolio pair of \eqref{eq:wealth-equation} while $u(\cdot)$ is sampled from the randomized policy $\pi^\phi(\cdot)$.
The martingale loss is defined by
\begin{equation*}
L(\theta) := \mathbb{E}\int_0^T
\bigl(V^\theta(t,x_t;w)-[(x_T-w)^2 - (w-z)^2]\bigr)^2\,dt.
\end{equation*}
We use the mini-batch stochastic gradient descent (SGD) method to optimize the martingale loss.
Discretizing $(x(\cdot),u(\cdot))$ into $\{(x_i, u_i)\}_{i=0}^N$ on the given time grid, we can compute the
gradient by
\begin{equation}
\label{eq:martingale-loss-gradient-formula}
\begin{aligned}
\frac{\partial}{\partial \theta^{(1)}_i} L(\theta)
&= \begin{cases}
\mathbb{E}\biggl[2\Bigl(
V^\theta(t_i, x_i; w) - \bigl[(x_N - w)^2 - (w - z)^2\bigr]
\Bigr) \cdot e^{\theta^{ (1) }_i}(x_i-w)^2 \frac{T}{N}\biggr],
&\quad\text{ if } x_i - w < 0,\\
0, &\quad\text{ if } x_i - w \geq 0,
\end{cases}\\
\frac{\partial}{\partial \theta^{(2)}_i} L(\theta)
&= \begin{cases}
0,&\quad\text{ if } x_i - w < 0,\\
\mathbb{E}\biggl[2\Bigl(
V^\theta(t_i, x_i; w) - \bigl[(x_N - w)^2 - (w - z)^2\bigr]
\Bigr) \cdot e^{\theta^{ (2) }_i}(x_i-w)^2 \frac{T}{N} \biggr],
&\quad\text{ if } x_i - w \geq 0.
\end{cases}
\end{aligned}
\end{equation}

As discussed earlier,
the policy parameter $\phi$ is identified  as vectors $\{(\phi^{(1)}_i,\phi^{(2)}_i)\}_{i=0}^N$ for \eqref{eq:parameterized-mean-policy},
and the value function parameter $\theta$  as vectors $\{(\theta^{(1)}_i,\theta^{(2)}_i)\}_{i=0}^N$ for \eqref{eq:parameterized-value-function}.
In our implementation,
$\phi^{(1)}_N$ and $\phi^{(2)}_N$ are fixed to be 0 as they are never used
(the system exits at the terminal time and the terminal control will not be applied).
In addition,
$\theta^{(1)}_N$ and $\theta^{(2)}_N$ are fixed to be 0 to satisfy the terminal condition
$V^\theta(T,x) = (x-w)^2 - (w-z)^2$.

\subsection{Policy Gradient}
Policy gradient is a popular approach in model-free RL.
It uses the learned value function to construct an estimator of its gradient with respect to the policy parameter, providing the direction of moving to the next policy iterate.
There are many such estimators proposed,
with different trade-offs in terms of efficiency, bias, and variance.
In this paper,
we use the gradient estimator of \citet{jiaPolicyGradientActorcritic2022} developed for
continuous-time RL and hence well suited for the setting of this paper.

\begin{lemma}
\label{lemma:policy-gradient-phi-general}
Let $\pi^\phi$ be a parameterized randomized policy, and let
\[
V^\phi(t,x)
:=
\mathbb{E}\!\left[
g(x_T)\mid x_t=x
\right]
\]
be the corresponding value function,
where $(x(\cdot),u(\cdot))$ denotes the wealth--portfolio trajectory generated by
$\pi^\phi$.
Then the policy gradient admits the representation
\begin{equation}
\label{eq:policy-gradient-phi-general}
\frac{\partial}{\partial\phi}
V^\phi(t,x)
=
\mathbb{E}
\left[
\int_t^T
\left(
\frac{\partial}{\partial\phi}
\log
\pi^\phi(u_s\mid s,x_s)
\right)
\,dV^\phi(s,x_s)
\Biggm|
x_t=x
\right].
\end{equation}
\end{lemma}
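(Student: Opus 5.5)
Since $V^\phi$ is the value function of a fixed randomized policy, I will obtain the gradient by differentiating its Feynman--Kac PDE in $\phi$ and then reading off the resulting source term probabilistically. This is the argument of \citet{jiaPolicyGradientActorcritic2022}, specialised to the present terminal-cost setting with no running cost and no entropy term.

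\emph{Step 1: the PDE.} Under the exploratory dynamics \eqref{eq:exploratory-wealth-equation}, $V^\phi$ solves
\[
\partial_t V^\phi+\int_{\mathbb{R}^m}\mathcal{L}^u V^\phi(t,x)\,\pi^\phi(u\mid t,x)\,du=0,\qquad V^\phi(T,\cdot)=g,
\]
where $\mathcal{L}^u v:=(\sigma_t\rho_t)^\intercal u\,\partial_x v+\tfrac12 u^\intercal\sigma_t\sigma_t^\intercal u\,\partial_{xx}v$. Differentiating in $\phi$ and using $\partial_\phi\pi^\phi=\pi^\phi\,\partial_\phi\log\pi^\phi$ shows that $\dot V:=\partial_\phi V^\phi$ solves a linear PDE of the same type with zero terminal data and source term
\[
\int \mathcal{L}^u V^\phi(t,x)\,\partial_\phi\log\pi^\phi(u\mid t,x)\,\pi^\phi(u\mid t,x)\,du .
\]
The terminal data is zero because $g$ does not depend on $\phi$.

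\emph{Step 2: Feynman--Kac.} Applying Feynman--Kac to the linear PDE for $\dot V$ gives
\[
\partial_\phi V^\phi(t,x)=\mathbb{E}\Bigl[\int_t^T\!\!\int \mathcal{L}^u V^\phi(s,x_s)\,\partial_\phi\log\pi^\phi(u\mid s,x_s)\,\pi^\phi(u\mid s,x_s)\,du\,ds\Bigm| x_t=x\Bigr].
\]
The expectation is taken along the exploratory state process.

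\emph{Step 3: pass to sampled actions.} By the standard equivalence between exploratory dynamics and sampled-action dynamics, the inner $\pi^\phi$-average can be replaced by the sampled action $u_s$ along the sampled trajectory $(x,u)$. This gives the integrand $\partial_\phi\log\pi^\phi(u_s\mid s,x_s)\,\bigl(\partial_t V^\phi+\mathcal{L}^{u_s}V^\phi\bigr)(s,x_s)\,ds$. The extra $\partial_t V^\phi$ term is harmless: it multiplies a score function whose conditional expectation given $x_s$ is zero, since $\int \partial_\phi\pi^\phi\,du=0$.

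\emph{Step 4: identify $dV^\phi$.} By It\^o's formula,
\[
dV^\phi(s,x_s)=\bigl(\partial_t V^\phi+\mathcal{L}^{u_s}V^\phi\bigr)\,ds+\partial_x V^\phi\,u_s^\intercal\sigma_s\,dW_s .
\]
The stochastic integral of the score against the $dW$ part is a martingale with zero mean, provided the score has enough integrability. Adding it therefore does not change the expectation, and this yields \eqref{eq:policy-gradient-phi-general}.

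\emph{Main obstacle.} The main obstacle is justifying regularity: that $V^\phi$ is $C^{1,2}$ and that differentiation in $\phi$ may be interchanged with the expectation and with the PDE. Here $V^\phi$ is only piecewise smooth, with a kink at $x=w$. For the parametrization \eqref{eq:parameterized-pi-phi}, however, the covariance $(x-w)^2\Psi_0$ and the mean both vanish at $x=w$, so trajectories never cross this boundary (cf.\ the remark after Theorem~\ref{theorem:state-dependent}). I would therefore carry out the argument separately on $\Gamma_1$ and $\Gamma_2$, where everything is explicit and quadratic. The required interchange and martingale integrability then follow from the Gaussian moment bounds of the score, which is polynomial in $u$ and $1/|x-w|$, combined with the local-martingale-plus-moment argument.
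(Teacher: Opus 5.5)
Your proposal is correct and follows the paper's route. The paper's proof is simply an appeal to Theorem~5 of \citet{jiaPolicyGradientActorcritic2022}, and your Steps~1--4 reproduce the argument behind that theorem, specialised to zero running reward, discount and entropy weight: differentiate the Feynman--Kac PDE in $\phi$, represent $\partial_\phi V^\phi$ along the exploratory process, pass to sampled actions using $\int\partial_\phi\pi^\phi\,du=0$, and identify $dV^\phi$ by It\^o's formula with a zero-mean martingale term. Your regularity discussion usefully adds what the paper leaves implicit, with one correction: $V^\phi$ has no kink at $x=w$ (it is $C^1$ there, and only $\partial_{xx}V^\phi$ jumps), and splitting into $\Gamma_1,\Gamma_2$ works cleanly because the score $\pm\Psi_0^{-1}(u-u^\phi)/(x-w)$ is conditionally $\mathcal{N}(0,\Psi_0^{-1})$ while $x-w$ evolves as a geometric Brownian motion with all moments finite.
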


\begin{proof}
The result follows directly from Theorem~5 of \citet{jiaPolicyGradientActorcritic2022}.
\end{proof}

Applying Lemma~\ref{lemma:policy-gradient-phi-general} to the exploratory objective
$\tilde J(\pi^\phi(\cdot);w)$ yields
\begin{equation}
\frac{\partial }{\partial \phi}\tilde{J}(\pi^\phi(\cdot);w)
= \frac{\partial}{\partial \phi} V^\phi(0,x;w)
= \mathbb{E}\int_0^T
\biggl(
\frac{\partial}{\partial \phi}
\log \pi^\phi(u_t \mid t, x_t)
\biggr)
\,dV^\phi(t,x_t;w).
\end{equation}
Recall that  $\pi^\phi$ is constructed by \eqref{eq:parameterized-pi-phi} and \eqref{eq:parameterized-mean-policy},
whereas its parameter $\phi$ is taken as $\{(\phi^{(1)}_i,\phi^{(2)}_i)\}_{i=0}^N$.
Given the discretized wealth--portfolio trajectory $\{(x_i, u_i)\}_{i=0}^N$,
the policy gradient \eqref{eq:policy-gradient-phi-general} in each component is
\begin{equation}
\label{eq:policy-gradient-formula}
\begin{aligned}
\frac{\partial}{\partial \phi^{(1)}_i} \tilde{J}(\pi^\phi(\cdot);w)
&= \begin{cases}
\mathbb{E}\biggl[-\Psi_0^{-1}\Bigl( \frac{u_i}{x_i - w} + \phi_i^{(1)}\Bigr)
\Bigl(V^\phi( t_{i+1},x_{i+1};w) - V^\phi( t_i,x_{i};w)\Bigr)
\biggr],
&\quad\text{ if } x_i - w < 0,\\
0, &\quad\text{ if } x_i - w \geq 0,
\end{cases}\\
\frac{\partial}{\partial \phi^{(2)}_i} \tilde{J}(\pi^\phi(\cdot);w)
&= \begin{cases}
0,&\quad\text{ if } x_i - w < 0,\\
\mathbb{E}\biggl[-\Psi_0^{-1}\Bigl( \frac{u_i}{x_i - w} + \phi_i^{(2)}\Bigr)
\Bigl(V^\phi( t_{i+1},x_{i+1};w) - V^\phi( t_i,x_{i};w)\Bigr)
\biggr],
&\quad\text{ if } x_i - w \geq 0.
\end{cases}
\end{aligned}
\end{equation}
To satisfy the nonnegativity constraint on the mean of
$\pi^\phi$,
we ensure $\phi^{(1)}_i \geq 0$ and $\phi^{(2)}_i \geq 0$ by projecting them onto  $\mathbb{R}^m_+$ after each gradient descent step.
This is known as the proximal gradient method for constrained optimization problems \citep{beckFirstorderMethodsOptimization2017}.
In our case,
the proximal gradient method is guaranteed to converge because $V^\phi(0,x;w)$
is convex in $\phi$; see \eqref{eq:theoretical-V-phi}--\eqref{eq:theoretical-theta-values}.

Finally,
the update rule of the multiplier $w$ is based on 
applying the standard stochastic approximation method to the equation
\eqref{eq:def-w-star},
where $\mathbb{E}[x^*(T;w)]$ is estimated as the empirical mean over wealth processes under portfolios sampled from $\pi^\phi$.

We summarize the overall procedure as Algorithm \ref{algo:policy-gradient}.
\begin{algorithm}[H]
\caption{\label{algo:policy-gradient}Model-Free Policy Gradient for the Mean--Variance Problem}
\begin{algorithmic}[1]
\REQUIRE Initial parameters $\phi,~\theta,~w$; learning rates $\alpha_\phi,~\alpha_\theta,~\alpha_w$; batch size $M$; iteration counts $n_{\phi},~n_{\theta},n_w$, and $n_{\text{warmup}}$.
\FOR{$\ell=1$ \TO $n_{\text{warmup}}$}
  \STATE Sample $M$ trajectories by executing $\pi^\phi$,
organized as $\mathcal{D}_\ell:=\{(x^{(j)}_i,u^{(j)}_i),~i=0,1,\ldots,N\}_{j=1}^{M}$.
  \STATE Estimate the gradient $g_\theta \leftarrow \partial_\theta L(\theta)$ using $\mathcal{D}_\ell$ via \eqref{eq:martingale-loss-gradient-formula} and update
  \begin{equation*}
    \theta \leftarrow \theta - \alpha_{\theta} g_{\theta}.
  \end{equation*}
\ENDFOR
\STATE Initialize an empty data buffer $\mathcal{X}$.
\FOR{$k = 1$ \TO $n_{\phi}$}
  \STATE Initialize an empty data buffer $\mathcal{D}$.
  \FOR{$\ell = 1$ \TO $n_{\theta}$}
    \STATE Sample $M$ trajectories by executing $\pi^\phi$,
  organized as $\mathcal{D}_\ell:=\{(x^{(j)}_i,u^{(j)}_i),~i=0,1,\ldots,N\}_{j=1}^{M}$.
    \STATE Estimate the gradient $g_\theta \leftarrow \partial_\theta L(\theta)$ using $\mathcal{D}_\ell$ via \eqref{eq:martingale-loss-gradient-formula} and update
    \begin{equation*}
      \theta \leftarrow \theta - \alpha_{\theta} g_{\theta}.
    \end{equation*}
    \STATE Append $\mathcal{D}_\ell$ to $\mathcal{D}$.
  \ENDFOR
  \STATE Estimate the policy gradient $g_\phi \leftarrow \partial_\phi \tilde{J}(\pi^\phi)$ using $\mathcal{D}$ via \eqref{eq:policy-gradient-formula} and update
  \begin{equation*}
    \phi \leftarrow \max(0, \phi - \alpha_\phi g_\phi).
  \end{equation*}
  \STATE Compute the empirical mean of $x_T$ in $\mathcal{D}$ and append it to $\mathcal{X}$.
  \IF{$(k \text{ mod } n_w) = 0$}
    \STATE Compute $\bar{x}_T$ as the average of the $n_w$ most recent entries in $\mathcal{X}$ and update
      \begin{equation*}
        w \leftarrow w - \alpha_w(\bar{x}_T - z).
      \end{equation*}
  \ENDIF
\ENDFOR
\RETURN Updated policy $\pi^\phi$ and updated multiplier $w$.
\end{algorithmic}
\end{algorithm}

\section{A Numerical Example}
We test Algorithm \ref{algo:policy-gradient} on a time-invariant example of the problem \eqref{eq:mv-problem} under no-shorting constraints, and then
compare the results with the theoretical (ground truth) solution obtained from \eqref{eq:def-mu1-mu2-P1-P2} and  \eqref{eq:optimal-u-target}.

\textbf{Problem data and theoretical solution.}
We set
\[
b - r \mathbf{1} =
\begin{bmatrix}
0.06 \\ 0.12 \\ 0.16
\end{bmatrix},\quad
\sigma =
\begin{bmatrix}
0.20&0&0\\
0&0.25&0\\
0&0&0.35
\end{bmatrix}
\begin{bmatrix}
1 & 0 & 0\\
\frac12 & \sqrt3/2 & 0\\
\frac12 & \sqrt3/6 & \sqrt6/3
\end{bmatrix}.
\]
Note here $\sigma=\operatorname{diag}\{\nu\}\Lambda$,
where $\nu$ is the asset volatilities and $\Lambda$ is the Cholesky factor of the correlation matrix.
Moreover, set the investment horizon $T=1$ and
the initial wealth $x_0=1$.

In our experiment, we take the exploration variance parameter $\Psi_0=\tfrac{1}{5}I_3$,
the time discretization step size $\frac{T}{N}=0.01$,
and the batch size $M=512$ for each simulation.

The theoretical solution \eqref{eq:optimal-u-target} of the no-shorting MV problem \eqref{eq:mv-problem} in this case specializes to
\begin{equation*}
u^*(t,x) = \begin{cases}
-\phi^*(x - w^*),&\quad \text{ if } x < w^*,\\
0,&\quad\text{ if } x \geq w^*,
\end{cases}
\end{equation*}
where the pair $(\phi^*,w^*)$ can be computed  from solving the following problem
\begin{equation*}
\max_{w \in \mathbb{R}} \min_{\phi \in \mathbb{R}_+^3} J(\phi,w) := e^{[\phi^\intercal \sigma\sigma^\intercal \phi - 2(\sigma\rho)^\intercal \phi]T}(x_0 - w)^2 - (w - z)^2.
\end{equation*}
Note
$\phi^*$ is exactly the coefficient $\mu_t^{\Gamma_1}$ defined in \eqref{eq:def-mu1-mu2-P1-P2},
which is time-independent in this example.
Moreover,
$\mu_t^{\Gamma_2}\equiv0$ as $\sigma_t\rho_t=b_t - r_t\mathbf{1} \geq 0$ for all $t$.

As the dynamics is time-invariant,
the policy parameter $\phi$ reduces to two constant vectors $\phi^{(1)},\phi^{(2)}\in\mathbb{R}^3_+$.
Moreover,
because the initial $x_0$ lies in either $\Gamma_1$ or $\Gamma_2$,
and as explained earlier the state process driven by the parameterized policy \eqref{eq:parameterized-pi-phi}--\eqref{eq:parameterized-mean-policy} will never escape from the initial region,
it suffices to parameterize and update only one of these vectors,
which we denote by $\phi$.

We run Algorithm 1 to generate iterates of $(\phi,w) \in  \mathbb{R}^3_+ \times \mathbb{R}$,
and compare the final estimates with the corresponding theoretical values.
We emphasize that Algorithm 1 is completely model-free,
which uses policy gradient to update $\phi$ and stochastic approximation to update $w$.

\textbf{Results for different expected returns.}
Figure~\ref{fig:results-for-z} shows the iterates of $\phi$, $w$, $\mathbb{E}[x_T]$,
and $\operatorname{Var}(x_T)$ for $z=1.10$, $1.15$, and $1.20$ respectively.
Table~\ref{tab:results-for-z} reports the final estimates.
These results demonstrate that our algorithm leads to convergence  to the ground truth values without the knowledge of model parameters. In particular, it learns the optimal values of the terminal mean and variance remarkably well.

\textbf{Extreme target return.}
We next consider an extreme case $z=2.0$, corresponding to a 100\% expected return within a year.
Figure~\ref{fig:results-for-z-extreme} and Table~\ref{tab:results-for-z} summarize the results.
Note that Algorithm~1 enforces the constraint $\phi \geq 0$ using the projected gradient descent method,
ensuring that the mean strategy $u^\phi$ \eqref{eq:parameterized-mean-policy} always satisfies the no-shorting constraint.
In this case, in terms of the finally learned values, $\phi$ is significantly away from the ground truth, while the other parameters, especially the terminal mean and variance,  are still very close to the respective theoretical values. In other words, the portfolio policy learned from our algorithm  is quite different from the theoretically optimal one, but it is nearly optimal because the resulting performance is close to that of the oracle. One must note that in any reinforcement learning problem, what matters is the achieved  {\it performance} much more than the proximity  of the learned policy from the optimal one. On the other hand, the high target return renders
a very high variance (risk), consistent with the classical mean--variance theory.

\textbf{Robustness check on initialization and learning rate.}
The above reports the numerical results with initial values $(\theta,\phi,w)=(0,(0.1,0.1,0.1),z)$,
and learning rates $(\alpha_\theta,\alpha_\phi,\alpha_w)=(0.001,0.1,0.01)$.
We have also conducted experiments with different initial points and different learning rates.
We do not include the corresponding figures and tables here for a concise numerical section, but just report that the algorithm is robust to initialization.
For learning rates,
the observed behavior follows the usual pattern of SGD:
large learning rates give faster convergence but larger variance,
while smaller learning rates lead to slower convergence but smaller variance.

\begin{figure}[p]
\centering
\includegraphics[width=.9\textwidth]{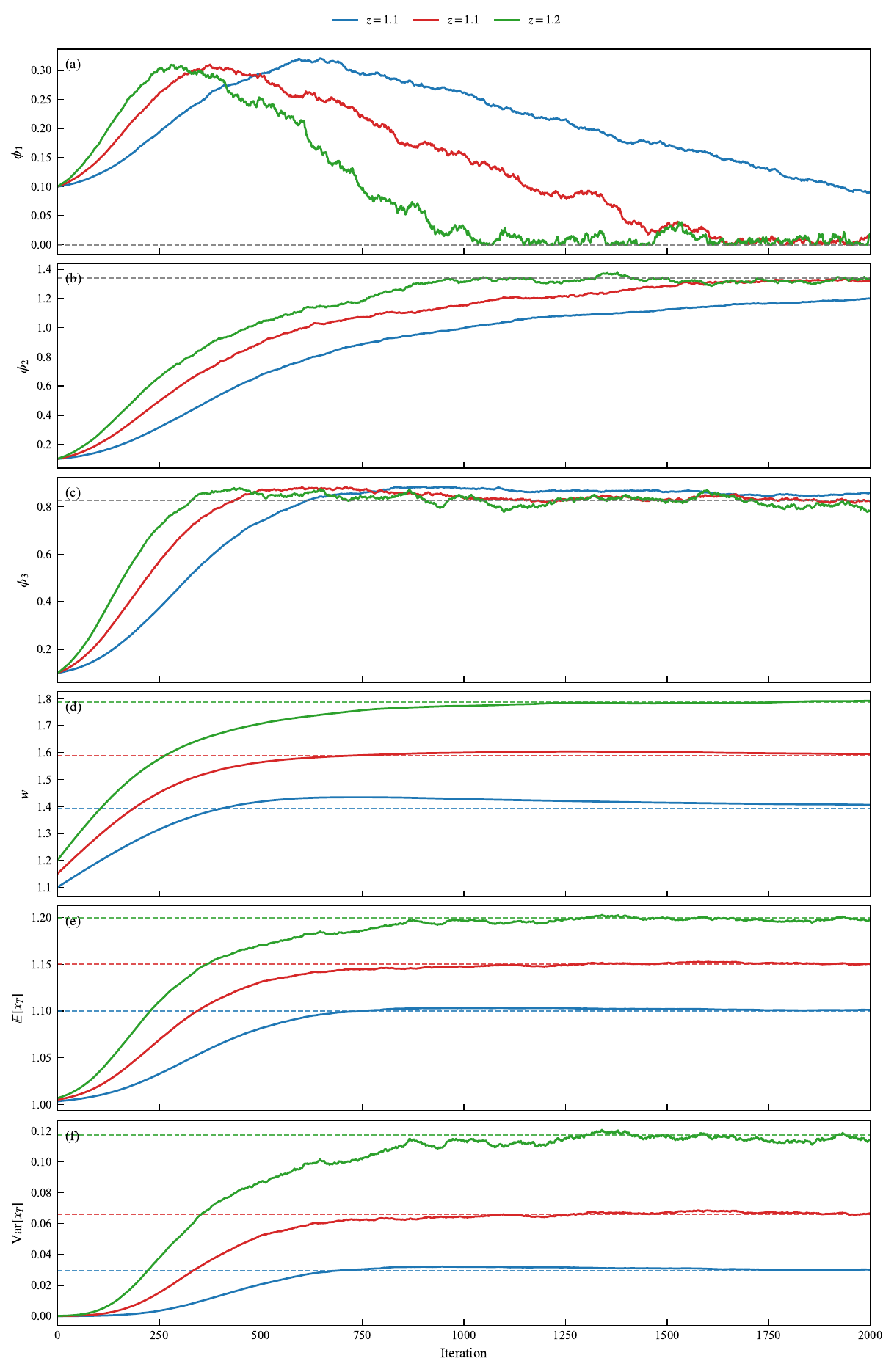}
\caption{\label{fig:results-for-z}
Iterates of $\phi$, $w$, $\mathbb{E}[x_T]$,
and $\operatorname{Var}(x_T)$ in the main loop of Algorithm~1 for different expected returns $z$.
For clarity,
the first three panels depict the three components of $\phi$ respectively.
Dashed horizontal lines indicate the corresponding theoretical values.
}
\end{figure}

\begin{table}[h]
\caption{Final estimates for different expected returns $z$,
including the extreme case $z=2.0$.}
\label{tab:results-for-z}
\centering
\small
\setlength{\tabcolsep}{4pt}
\begin{tabular}{ccccccccccc}
\toprule
$z$ & $\phi$ & $\phi^*$ & $w$ & $w^*$ & $\mathbb{E}[x_T]$ & $\mathbb{E}[x_T^*]$ & $\operatorname{Var}(x_T)$ & $\operatorname{Var}(x_T^*)$ \\
\midrule
1.10 & $(0.090, 1.199, 0.858)$ & $(0.000,\,1.341,\,0.827)$ & 1.406 & 1.393 & 1.101 & 1.100 & 0.030 & 0.029 \\
1.15 & $(0.015, 1.321, 0.823)$ & $(0.000,\,1.341,\,0.827)$ & 1.595 & 1.590 & 1.150 & 1.150 & 0.066 & 0.066 \\
1.20 & $(0.017, 1.338, 0.781)$ & $(0.000,\,1.341,\,0.827)$ & 1.792 & 1.787 & 1.197 & 1.200 & 0.114 & 0.117 \\
2.00 & $(0.189, 1.010, 0.898)$ & $(0.000,\,1.341,\,0.827)$ & 4.965 & 4.934 & 1.957 & 2.000 & 2.742 & 2.934 \\
\bottomrule
\end{tabular}
\end{table}

\begin{figure}[p]
\centering
\includegraphics[width=.9\textwidth]{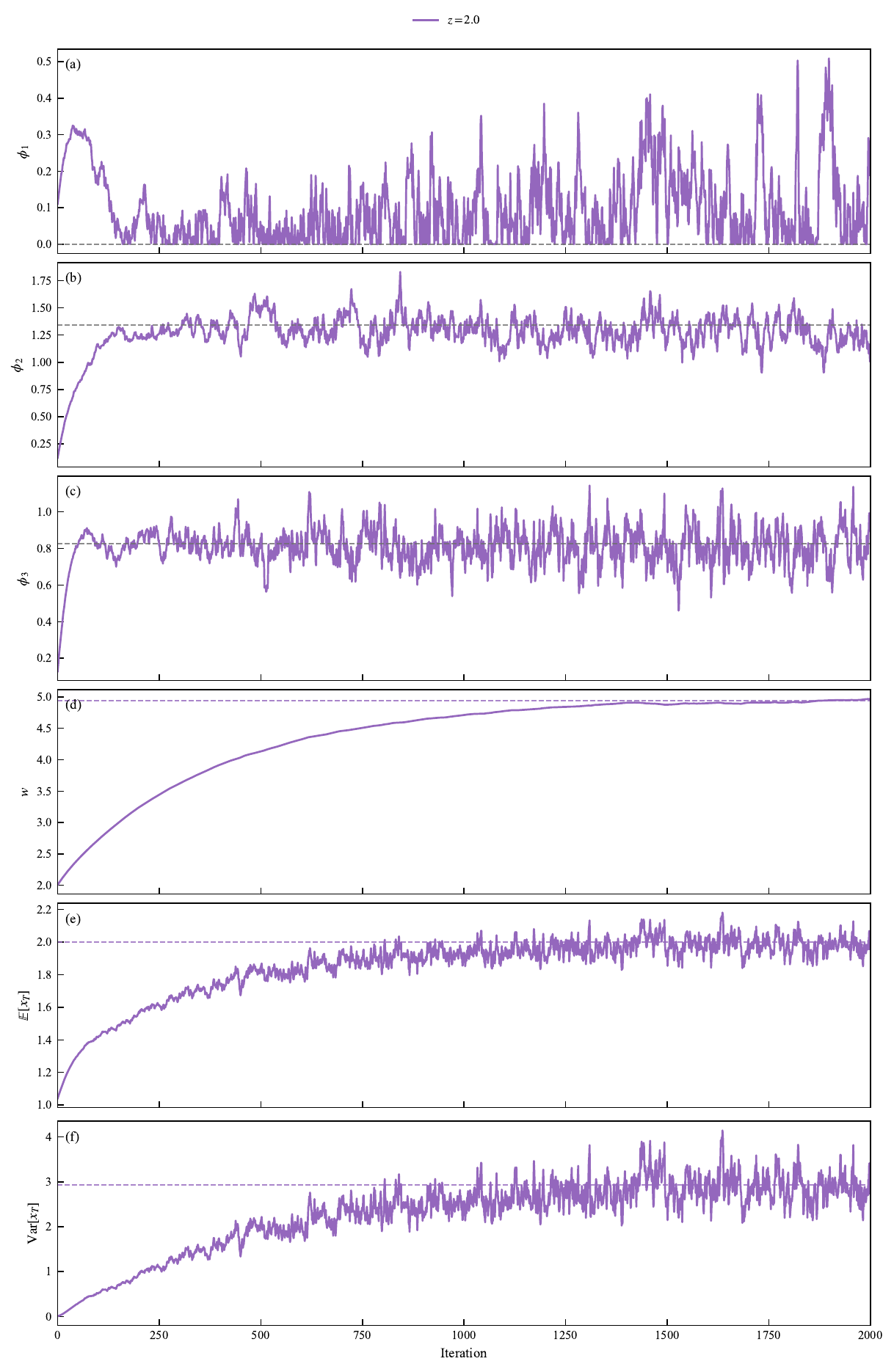}
\caption{\label{fig:results-for-z-extreme}
Iterates of $\phi$, $w$, $\mathbb{E}[x_T]$,
and $\operatorname{Var}(x_T)$ in the main loop of Algorithm~1 for $z=2.0$.
}
\end{figure}

\section{Conclusions}

This paper is the first to treat {\it constrained} continuous-time MV portfolio selection in the realm of reinforcement learning. The main ``trick'' is to take an alternative way of randomization: adding suitably chosen Gaussian noises to the admissible controls, instead of randomizing over the constrained control space. The resulting individually sampled policies may violate the constraint, but their mean is automatically feasible which will be improved until ultimately executed and implemented. This approach was first proposed in \cite{daiDatadrivenMertonsStrategies2025} and has been  applied in different application domains such as generative AI diffusion models \citep{huang2026art} and image generations \citep{huang2026amortized} for {\it unconstrained} problems, but it is potentially more powerful, as foreshadowed by the present paper,  for constrained and other types of RL problems.



\clearpage


\newpage

\section*{Appendix}

\appendix

\section{Viscosity Solutions}
\label{appendix:definition-of-viscosity-solution}
For reader's convenience, we recall the standard definition of viscosity solutions used in the proof of Theorem~\ref{theorem:state-dependent};
see \citet{yongStochasticControlsHamiltonian1999}.

When $\Sigma(t,x) = \Psi_t(x-w)^2$,
the HJB equation \eqref{eq:variance-regularized-HJB} becomes
\begin{equation}
\label{appendix:variance-regularized-HJB}
\left\{
\begin{aligned}
&\frac{\partial}{\partial t} v(t,x) + \inf_{a \in \mathbb{R}^m_+} G\biggl(t,x,a, \frac{\partial v}{\partial x}, \frac{\partial^2 v}{\partial x^2}\biggr) = 0, \\
&v(T,x) = (x-w)^2 - (w-z)^2,
\end{aligned}
\right.
\end{equation}
where
\begin{equation*}
G(t,x,a,p,P) := p a^\intercal \sigma_t \rho_t + \frac{1}{2} P \left[a^\intercal \sigma_t \sigma_t^\intercal a
+ (x-w)^2 \operatorname{tr}(\Psi_t\sigma_t\sigma_t^\intercal) \right].
\end{equation*}

\begin{definition}
A function $v \in C([0,T]\times \mathbb{R})$ is called a \emph{viscosity solution} of \eqref{appendix:variance-regularized-HJB} if
\begin{equation*}
v(T,x) = (x-w)^2 - (w-z)^2,\qquad \forall x \in \mathbb{R},
\end{equation*}
and for all $(t,x) \in [0,T] \times \mathbb{R}$,
\begin{equation*}
\begin{aligned}
&q + \inf_{a \geq 0} G(t,x,a,p,P) \geq 0,
&\qquad &\forall (q,p,P) \in D^{1,2,+}_{t,x}v(t,x),\\
&q + \inf_{a \geq 0} G(t,x,a,p,P) \leq 0,
&\qquad &\forall (q,p,P) \in D^{1,2,-}_{t,x}v(t,x),
\end{aligned}
\end{equation*}
where
\begin{equation*}
\begin{aligned}
D^{1,2,+}_{t,x}v(t_0,x_0) &:= \biggl\{ \biggl( \frac{\partial \varphi}{\partial t}(t_0,x_0), \frac{\partial \varphi}{\partial x}(t_0,x_0), \frac{\partial^2 \varphi}{\partial x^2}(t_0,x_0) \biggm| \varphi \in C^{1,2}([0,T]\times \mathbb{R}) \text{ and } \\
&\qquad\qquad\qquad v - \varphi \text{ attains a local maximum at } (t_0,x_0) \biggr) \biggr\},\\
D^{1,2,-}_{t,x}v(t_0,x_0) &:= \biggl\{ \biggl( \frac{\partial \varphi}{\partial t}(t_0,x_0), \frac{\partial \varphi}{\partial x}(t_0,x_0), \frac{\partial^2 \varphi}{\partial x^2}(t_0,x_0) \biggm| \varphi \in C^{1,2}([0,T]\times \mathbb{R}) \text{ and } \\
&\qquad\qquad\qquad v - \varphi \text{ attains a local minimum at } (t_0,x_0) \biggr) \biggr\}.
\end{aligned}
\end{equation*}
\end{definition}

\begin{remark}
If $v \in C^{1,2}_{t,x}([0,T] \times \mathbb{R})$,
then
\begin{equation*}
\begin{aligned}
D^{1,2,+}_{t,x}v(t_0,x_0) = \biggl\{ \frac{\partial v}{\partial t}(t_0,x_0) \biggr\} \times \biggl\{\frac{\partial v}{\partial x }(t_0,x_0)\biggr\} \times \biggl[\frac{\partial^2 v}{\partial x^2}(t_0,x_0), \infty\biggr),\\
D^{1,2,-}_{t,x}v(t_0,x_0) = \biggl\{ \frac{\partial v}{\partial t}(t_0,x_0) \biggr\} \times \biggl\{\frac{\partial v}{\partial x }(t_0,x_0)\biggr\} \times \biggl(-\infty, \frac{\partial^2 v}{\partial x^2}(t_0,x_0)\biggr].
\end{aligned}
\end{equation*}
In this case,
\begin{equation*}
D^{1,2,+}_{t,x}v(t_0,x_0) \cap D^{1,2,-}_{t,x}v(t_0,x_0) = \biggl( \frac{\partial v}{\partial t}(t_0,x_0), \frac{\partial v}{\partial x }(t_0,x_0), \frac{\partial^2 v}{\partial x^2 }(t_0,x_0)\biggr).
\end{equation*}
\end{remark}

\end{document}